\newtheorem{theorem}{Theorem}
\newtheorem{proposition}[theorem]{Proposition}
\newtheorem{lemma}[theorem]{Lemma}
\begin{document}
\sloppy
\baselineskip=20pt

\newcommand{\Cov}{{\rm Cov}}
\newcommand{\Cor}{{\rm Cor}}
\newcommand{\Var}{{\rm Var}}
\newcommand{\real}{\mathbb{R}}
\newcommand{\complex}{\mathbb{C}}
\newcommand{\indicator}{\mathbbm{1}}
\newcommand{\E}{\mathbb{E}}
\def\T{^{\rm T}}

\newcommand{\ba}{\mathbf{a}}
\newcommand{\bb}{\mathbf{b}}
\newcommand{\bc}{\mathbf{c}}
\newcommand{\bd}{\mathbf{d}}
\newcommand{\be}{\mathbf{e}}
\newcommand{\beff}{\mathbf{f}}
\newcommand{\bg}{\mathbf{g}}
\newcommand{\bh}{\mathbf{h}}
\newcommand{\bi}{\mathbf{i}}
\newcommand{\bj}{\mathbf{j}}
\newcommand{\bk}{\mathbf{k}}
\newcommand{\bl}{\mathbf{l}}
\newcommand{\bell}{\boldsymbol{\ell}}
\newcommand{\bm}{\mathbf{m}}
\newcommand{\bn}{\mathbf{n}}
\newcommand{\bo}{\mathbf{o}}
\newcommand{\bp}{\mathbf{p}}
\newcommand{\bq}{\mathbf{q}}
\newcommand{\br}{\mathbf{r}}
\newcommand{\bs}{\mathbf{s}}
\newcommand{\bt}{\mathbf{t}}
\newcommand{\bu}{\mathbf{u}}
\newcommand{\bv}{\mathbf{v}}
\newcommand{\bw}{\mathbf{w}}
\newcommand{\bx}{\mathbf{x}}
\newcommand{\by}{\mathbf{y}}
\newcommand{\bz}{\mathbf{z}}

\newcommand{\vep}{\varepsilon}
\newcommand{\vphi}{\varphi}

\newcommand{\sI}{\mathscr{I}}
\newcommand{\sR}{\mathscr{R}}
\newcommand{\sP}{\mathscr{P}}

\newcommand{\bB}{\mathbf{B}}
\newcommand{\bC}{\mathbf{C}}
\newcommand{\bH}{\mathbf{H}}
\newcommand{\bI}{\mathbf{I}}
\newcommand{\bL}{\mathbf{L}}
\newcommand{\bM}{\mathbf{M}}
\newcommand{\bR}{\mathbf{R}}
\newcommand{\bU}{\mathbf{U}}
\newcommand{\bW}{\mathbf{W}}
\newcommand{\bX}{\mathbf{X}}
\newcommand{\bY}{\mathbf{Y}}
\newcommand{\bZ}{\mathbf{Z}}
\newcommand{\bomega}{\boldsymbol{\omega}}
\newcommand{\bbeta}{\boldsymbol{\beta}}

\newcommand{\bphi}{\boldsymbol{\phi}}
\newcommand{\blambda}{\boldsymbol{\lambda}}
\newcommand{\btheta}{\boldsymbol{\theta}}
\newcommand{\bvep}{\boldsymbol{\varepsilon}}
\newcommand{\bmu}{\boldsymbol{\mu}}

\begin{center}{
  \Large \bf
  Coherence for Random Fields
}

\bigskip
\bigskip

{\bf William Kleiber\footnote[1]{Department of Applied Mathematics,
  University of Colorado, Boulder, CO.  
  Author e-mail: \texttt{william.kleiber@colorado.edu}}
}

\bigskip
\bigskip

{\bf \today}

\end{center}

\bigskip
\bigskip

\begin{abstract}
Multivariate spatial field data are increasingly common and whose modeling typically 
relies on building cross-covariance functions to describe cross-process 
relationships.  An alternative viewpoint is to model the matrix of spectral 
measures.  We develop the notions of coherence, phase and gain for multidimensional 
stationary processes.  Coherence, as a function of frequency, 
can be seen to be a measure of linear relationship between two spatial processes 
at that frequency band.  We use the coherence function to illustrate fundamental 
limitations on a number of previously proposed constructions for multivariate 
processes, suggesting these options are not viable for real data.  
We also give natural interpretations to cross-covariance parameters of the 
Mat\'ern class, where the smoothness indexes dependence at low frequencies while the 
range parameter can imply dependence at low or high frequencies.  
Estimation follows from smoothed multivariate periodogram matrices.  
We illustrate the estimation and interpretation of these functions on two 
datasets, forecast and reanalysis sea level pressure and geopotential 
heights over the equatorial region.  Examining these functions lends insight that 
would otherwise be difficult to detect and model using standard 
cross-covariance formulations.

\bigskip
\noindent
{\sc Keywords: coherency; gain; multivariate random field; 
  periodogram; phase; reanalysis; squared coherence; spectral density} 
\end{abstract}

\section{Introduction}  \label{sec:intro}

The theory of univariate continuous stochastic processes has become well 
developed over nearly a century of research.  The past quarter century 
or so has seen an increasing interest and development of models for 
multivariate spatial processes.  The recent review by \citet{genton2015} 
gives a relatively comprehensive treatment of the basic approaches that have 
been explored to build stochastic spatial models.  In the 
discussion, \citet{bevilacqua2015} pose the question, given the recent deluge 
of multivariate constructions, ``which parametric model is more flexible?''
Indeed, the relative strengths and weaknesses of multivariate 
models have been explored only empirically, that is, by testing a battery 
of different models on particular datasets, and comparing performance 
either by likelihood values or by predictive cross-validation (in the 
multivariate context, spatial prediction is known as co-kriging).  Thus, 
a fundamental open question is: (1) to what extent can the flexibilities of 
model constructions be compared theoretically?  Additionally, 
most models are motivated in the covariance domain, and the 
natural follow-up question is: (2) are there other approaches than covariance 
to measure and quantify spatial dependence?  

We introduce the notion of spectral coherence, phase and gain for multidimensional 
and multivariate spatial random fields.  We propose that these functions allow 
for natural partial answers to the critical open questions (1) and (2).  
We show that a number of previously proposed models lack sufficient practical 
flexibility in terms of prediction, and we suggest insights into well established 
models such as the multivariate Mat\'ern, 
where parameters such as the cross-covariance smoothness and range have 
had elusive direct interpretations that relate to process behavior.  

Let us illustrate the ideas developed in this manuscript by considering the time 
series case first.  Suppose $Z_i(t), i=1,2$ is a bivariate complex-valued 
weakly stationary time series on $t\in\real$ with covariance functions 
$\Cov(Z_k(t+h),Z_k(t)) = C_{kk}(h)$ and cross-covariance functions 
$\Cov(Z_k(t+h),Z_\ell(t)) = C_{k\ell}(t)$ for $k\not=\ell$.  The corresponding 
spectral densities are $f_{k\ell}(\omega) = (2\pi)^{-1}\int_\real C_{k\ell}(h) 
\exp(-i \omega h) {\rm d}h$ for $\omega\in\real$.  Spectral modeling in time 
series is well developed; for example, traditional autoregressive moving average 
models imply processes with rational spectral densities.  
The function 
\[
  \gamma(\omega)^2 = \frac{|f_{12}(\omega)|^2}{f_{11}(\omega)f_{22}(\omega)}
\]
is known as the squared coherence function, and can be interpreted as a 
quantification of the linear relationship between $Z_1(t)$ and $Z_2(t)$ at 
frequency $\omega$ \citep{brockwell2009}.

We entertain two example datasets from the atmospheric sciences.  Both are 
reforecast and reanalysis data products over the equatorial region based on a 
well established numerical weather prediction (NWP) model.  
Reanalysis forecasts are from a fixed version of a 
NWP model that are run retrospectively to generate 
a large database of model forecasts and analyses (in this context, an analysis 
can be considered a best estimate of the current state of the atmosphere).  
First we look at forecasted surfaces of sea level pressure at daily forecast 
horizons between 24 and 192 hours.  We show that coherence can be used as 
a diagnostic to assess forecast quality, and additionally illustrate 
frequency bands at which forecasts improve over time.
The second dataset involves geopotential heights at differing pressure levels.  
We show that coherence and phase extract and highlight qualities of the 
spatial relationship between different pressure levels that are difficult 
to model using extant multivariate covariance constructions, and indeed 
illustrate some fundamental limitations of existing popular constructions.

\section{Spectra for Multivariate Random Fields}  \label{sec:spectra}

Suppose $\bZ(\bs) = (Z_1(\bs),\ldots,Z_p(\bs))\T\in\complex^p$ is a $p$-variate weakly 
stationary random field on $\bs\in\real^d$ admitting a matrix-valued covariance 
function $\bC(\bh) = (C_{ij}(\bh))_{i,j=1}^p$ where $C_{ij}(\bh) = 
\Cov(Z_i(\bs + \bh),Z_j(\bs))$.  For simplicity of exposition we suppose 
$\bZ(\bs)$ is a mean zero process.  For complex-valued stationary processes, 
$\Cov(Z_i(\bs_1),Z_j(\bs_2)) = \E(Z_i(\bs_1)\overline{Z_j(\bs_2)})$, so that 
$C_{ij}(\bh) = \overline{C_{ji}(\bh)}$.  
The main obstacle to multivariate process modeling is developing flexible 
classes of matrix-valued covariance functions $\bC$ that are nonnegative 
definite.  We say $\bC$ is nonnegative definite if, for any choices 
of $a_{ik}\in\complex$ and locations $\bs_k\in\real^d$ for $i=1,\ldots,p$ and 
$k=1,\ldots,n$ we have 
\[
  \sum_{i=1}^p \sum_{j=1}^p \sum_{k=1}^n \sum_{\ell=1}^n 
  a_{ik} \overline{a_{j\ell}} C_{ij}(\bs_k,\bs_\ell) \geq 0.
\]
Note this reduces to the usual definition of nonnegative definiteness 
for a univariate covariance, $p=1$.

For univariate processes, Bochner's Theorem states that $C_{ii}(\bh)$ is a 
valid (i.e., nonnegative definite) function if and only if it can be written 
\[
  C(\bh) = \int_{\real^d} \exp(i\bomega\T \bh) {\rm d}F(\bomega)
\]
where $F$ is a positive finite measure \citep{stein1999}.  
If $F$ admits a density $f$ with respect to the Lebesgue measure on $\real^d$, 
we call it the spectral density for $C$.  
The multivariate extension of Bochner's fundamental result is given by 
\citet{cramer1940}, and is contained in the following theorem specialized to 
covariances admitting spectral densities.
\begin{theorem}[Cram\'er 1940]  \label{th:cramer}
  A matrix-valued function $\bC:\real^d\to\complex^{p\times p}, 
  \bC = (C_{ij})_{i,j=1}^p$ is nonnegative definite if and only if 
  \begin{align*}
    C_{ij}(\bh) = \int_{\real^d} \exp(i\bomega\T\bh) f_{ij}(\bomega) {\rm d}\bomega
  \end{align*}
  for $i,j=1,\ldots,p$ such that the matrix 
  $\beff(\bomega) = (f_{ij}(\bomega))_{i,j=1}^p$ is nonnegative definite  
  for all $\bomega\in\real^d$.
\end{theorem}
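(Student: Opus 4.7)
\bigskip

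\noindent
\textbf{Proof plan.} The plan is to prove the two directions separately, using the univariate Bochner theorem (stated earlier in the excerpt) as a black box so that the multivariate result is obtained by reduction to scalar linear combinations of the components of $\bZ$.

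For sufficiency, assume each $C_{ij}$ admits the stated integral representation with $\beff(\bomega)$ nonnegative definite for every $\bomega\in\real^d$. Fix arbitrary scalars $a_{ik}\in\complex$ and locations $\bs_k\in\real^d$. I would substitute the spectral representation into the defining double sum, interchange the finite sums with the integral (Fubini applies since everything is a finite sum of absolutely integrable terms), and introduce the auxiliary functions $b_i(\bomega)=\sum_{k=1}^n a_{ik}\exp(i\bomega\T\bs_k)$. The integrand then collapses to the Hermitian form $\sum_{i,j=1}^p f_{ij}(\bomega)\,b_i(\bomega)\overline{b_j(\bomega)}$, which is pointwise nonnegative by hypothesis on $\beff(\bomega)$, so the integral is nonnegative.

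For necessity, assume $\bC$ is nonnegative definite and each $C_{ij}$ admits a spectral density $f_{ij}$. For a fixed vector $\bc=(c_1,\ldots,c_p)\T\in\complex^p$, define the scalar covariance $C_{\bc}(\bh)=\sum_{i,j}c_i\overline{c_j}\,C_{ij}(\bh)$. Nonnegative definiteness of $C_{\bc}$ follows from that of $\bC$ by plugging in $a_{ik}=c_i\alpha_k$ with arbitrary $\alpha_k\in\complex$: the resulting quadratic form factors and reduces to the univariate nonnegative definiteness condition for $C_{\bc}$. By the univariate Bochner theorem, $C_{\bc}$ has a nonnegative spectral measure, and by linearity in $i,j$ this measure has density $\sum_{i,j}c_i\overline{c_j}f_{ij}(\bomega)$ with respect to Lebesgue measure. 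Hence this quantity is nonnegative for almost every $\bomega$.

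The main technical obstacle is upgrading this from ``a.e. nonnegative for each fixed $\bc$'' to ``$\beff(\bomega)$ nonnegative definite for every $\bomega$'', because the exceptional null set a priori depends on $\bc$. I would fix a countable dense subset $D\subset\complex^p$ and take the union of the corresponding null sets, which is itself a Lebesgue null set $N$. Off $N$, the inequality $\sum_{i,j}c_i\overline{c_j}f_{ij}(\bomega)\ge 0$ holds simultaneously for all $\bc\in D$; by continuity of the quadratic form in $\bc$, it extends to all $\bc\in\complex^p$, so $\beff(\bomega)$ is nonnegative definite on $\real^d\setminus N$. Finally, one redefines $\beff$ on $N$ (for instance to the zero matrix) without altering the integral representations of any $C_{ij}$, yielding the claim for every $\bomega$. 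Apart from this measure-theoretic housekeeping, the whole argument is a clean reduction to the scalar Bochner theorem.
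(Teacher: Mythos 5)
Your argument is correct, but note that the paper does not actually prove Theorem \ref{th:cramer}: it is quoted as a classical result of Cram\'er (1940), stated in the form specialized to spectral measures admitting Lebesgue densities, and no proof appears in the appendix. So there is no ``paper proof'' to match; what you have written is a self-contained reduction to the scalar Bochner theorem, and it holds up. The sufficiency direction is exactly the standard computation: substituting the representation into the defining quadratic form and collapsing to $\int \sum_{i,j} f_{ij}(\bomega)\,b_i(\bomega)\overline{b_j(\bomega)}\,{\rm d}\bomega$ with $b_i(\bomega)=\sum_k a_{ik}\exp(i\bomega\T\bs_k)$ is valid (the Fubini step is justified because pointwise nonnegative definiteness of $\beff$ gives $|f_{ij}|\le(f_{ii}+f_{jj})/2$, so integrability of the diagonal densities controls the off-diagonals). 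For necessity, the specialization $a_{ik}=c_i\alpha_k$ correctly shows that $C_{\bc}=\sum_{i,j}c_i\overline{c_j}C_{ij}$ is a scalar nonnegative definite (and continuous, being the Fourier transform of an $L^1$ function) covariance, and uniqueness of Fourier transforms identifies its Bochner density with $\sum_{i,j}c_i\overline{c_j}f_{ij}$. You are also right to flag the passage from ``a.e.\ for each fixed $\bc$'' to ``everywhere'' as the only genuine technical point, and your countable-dense-set argument plus redefinition of $\beff$ on a null set resolves it; this is precisely why the theorem is stated with densities rather than with a canonical pointwise-defined matrix. The one small item worth making explicit is that the quadratic form $\bc\mapsto\sum_{i,j}c_i\overline{c_j}f_{ij}(\bomega)$ is real-valued, which requires $f_{ij}=\overline{f_{ji}}$ a.e.; this follows from $C_{ij}(\bh)=\overline{C_{ji}(\bh)}$ (noted in Section \ref{sec:spectra}) or can be folded into the same null-set bookkeeping. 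With that remark added, the proof is complete.
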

The functions $f_{ij}(\bomega)$ are the spectral and cross-spectral 
densities for the marginal and cross-covariance functions $C_{ij}(\bh)$.  
Note that $f_{ij}(\bomega) = \overline{f_{ji}(\bomega)}$.  When the spectral 
density exists, it can be solved for as the Fourier transform of the covariance 
function,
\begin{align*}
  f_{ij}(\bomega) = \frac{1}{(2\pi)^d} \int_{\real^d} 
  \exp(-i\bomega\T\bh) C_{ij}(\bh) {\rm d}\bh.
\end{align*}
Theorem \ref{th:cramer} has primarily been used in practice to build multivariate 
covariance models, by specifying matrices of spectral densities that are 
nonnegative definite for all frequencies.

\subsection{Coherence}  \label{subsec:coh}

In time series, the notion of frequency coherence is well developed, and 
can be used, for instance, to assess whether 
one time series is related to another by a time invariant linear filter.  These 
notions carry over to the spatial case, and form the point of entry for our 
analyses.

If $C_{ij}(\bh), i,j=1,2$ form a matrix-valued covariance function with 
associated spectral densities $f_{ij}(\bomega)$, then define the coherence 
function (or coherency function)
\begin{align*}
  \gamma(\bomega) = \frac{f_{12}(\bomega)}{\sqrt{f_{11}(\bomega)f_{22}(\bomega)}}.
\end{align*}
We might assume $f_{ii}(\bomega) > 0$ for all $\bomega\in\real^d$ for $i=1,2$, 
but can define $\gamma(\bomega) = 0$ if $f_{ii}(\bomega) = 0$.  The coherence 
function can be complex-valued, so in practice we examine the absolute coherence 
function, $|\gamma(\bomega)|$.  
The real-valued function $|\gamma(\bomega)|^2$ is the squared coherence function, 
and by Theorem \ref{th:cramer}, $0\leq |\gamma(\bomega)|^2 \leq 1$ for all 
$\bomega$.  Values of $|\gamma(\bomega)|$ near unity indicate a linear relationship 
between $Z_1(\bs)$ and $Z_2(\bs)$ at particular frequency bands.

The following theorem relates the coherence to optimal prediction of a 
random process based on another process.  The predictive estimator is based on 
a kernel smoothed process which is a natural predictor given the interpretation 
of the univariate kriging weights as a kernel function \citep{kleiber2015}.
\begin{theorem}  \label{th:prediction}
  Suppose $(Z_1(\bs),Z_2(\bs))\T$ is a complex-valued mean zero weakly stationary 
  bivariate field with matrix-valued covariance $\bC(\bh)$ admitting 
  a spectral density matrix $\beff(\bomega) = (f_{ij}(\bomega))_{i,j=1}^2$ 
  that is everywhere nonzero.  
  Then the continuous square integrable function $K(\bu):\real^d\to\complex$ that 
  minimizes $\E\big|Z_1(\bs_0) - \int_{\real^d} K(\bu - \bs_0) Z_2(\bu) 
  {\rm d}\bu\big|^2$ is 
  \begin{align}  \label{eq:k.pred.coh}
    K(\bu) = \frac{1}{(2\pi)^d}\int_{\real^d} \exp(-i\bomega\T\bu)
    \frac{f_{12}(\bomega)}{f_{22}(\bomega)} {\rm d}\bomega
    = \frac{1}{(2\pi)^d}\int_{\real^d} \exp(-i\bomega\T\bu)
    \sqrt{\frac{f_{11}(\bomega)}{f_{22}(\bomega)}}\gamma(\bomega) {\rm d}\bomega.
  \end{align}
  Additionally, the spectral density of the predictor 
  $\hat{Z}_1(\bs_0) = \int_{\real^d} K(\bu - \bs_0) Z_2(\bu) {\rm d}\bu$ is 
  \begin{align}  \label{eq:cond.coh}
    f_{1|2}(\bomega) = f_{11}(\bomega)|\gamma(\bomega)|^2
  \end{align}
  for all $\bomega\in\real^d$.
\end{theorem}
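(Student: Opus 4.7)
The natural approach is to pass to the frequency domain using Cramér's spectral representation, because convolution becomes multiplication there and the mean-squared error decouples into a pointwise quadratic in the Fourier transform of $K$.

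First I would write the bivariate field via its spectral representation $Z_j(\bs) = \int_{\real^d}\exp(i\bomega\T\bs)\,\mathrm{d}Y_j(\bomega)$ for $j=1,2$, where $(Y_1,Y_2)$ are jointly orthogonal-increment processes with $\E[\mathrm{d}Y_i(\bomega)\overline{\mathrm{d}Y_j(\bomega)}] = f_{ij}(\bomega)\,\mathrm{d}\bomega$; existence of these processes follows from Theorem \ref{th:cramer}. Let $g(\bomega) = \int_{\real^d} K(\bv)\exp(i\bomega\T\bv)\,\mathrm{d}\bv$ denote the Fourier-type transform of $K$ (well defined as an $L^2$ function by Plancherel since $K$ is square integrable). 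Interchanging the deterministic and stochastic integrals, which is justified by Fubini in the $L^2$ stochastic-integral sense, yields
\begin{align*}
  \int_{\real^d} K(\bu-\bs_0)Z_2(\bu)\,\mathrm{d}\bu
  = \int_{\real^d} \exp(i\bomega\T\bs_0)\, g(\bomega)\,\mathrm{d}Y_2(\bomega).
\end{align*}

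Next I would compute the MSE using the orthogonality properties of the increments. The error is $\int \exp(i\bomega\T\bs_0)(\mathrm{d}Y_1(\bomega) - g(\bomega)\mathrm{d}Y_2(\bomega))$, whose squared expectation is
\begin{align*}
  \int_{\real^d}\Bigl(f_{11}(\bomega) - g(\bomega)\overline{f_{12}(\bomega)}
  - \overline{g(\bomega)} f_{12}(\bomega) + |g(\bomega)|^2 f_{22}(\bomega)\Bigr)\mathrm{d}\bomega.
\end{align*}
Because the integrand is a convex quadratic in $g(\bomega)$ at every frequency, the minimization decouples pointwise; completing the square (using $f_{22}(\bomega)>0$) gives $g(\bomega) = f_{12}(\bomega)/f_{22}(\bomega)$ as the unique minimizer. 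Fourier inversion then recovers the claimed formula for $K(\bu)$, and the second equality in \eqref{eq:k.pred.coh} follows by rewriting $f_{12}/f_{22} = \sqrt{f_{11}/f_{22}}\,\gamma$.

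For the spectral density of $\hat Z_1(\bs_0)$, I would observe that with the optimal $g$ plugged in, $\hat Z_1(\bs) = \int \exp(i\bomega\T\bs)\,g(\bomega)\,\mathrm{d}Y_2(\bomega)$ is a stationary process whose spectral density is $|g(\bomega)|^2 f_{22}(\bomega) = |f_{12}(\bomega)|^2/f_{22}(\bomega) = f_{11}(\bomega)|\gamma(\bomega)|^2$, which gives \eqref{eq:cond.coh}.

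The main technical obstacle is justifying the interchange of the deterministic convolution and the stochastic Fourier integral, together with checking that the formal pointwise minimization yields an attainable predictor, i.e.\ that $g(\bomega)=f_{12}/f_{22}$ lies in $L^2(f_{22}\,\mathrm{d}\bomega)$ so that the stochastic integral defining $\hat Z_1$ is well defined. The former is standard in the theory of stationary processes, and the latter follows because $|g|^2 f_{22} = |f_{12}|^2/f_{22} \le f_{11}$ by the Cauchy--Schwarz inequality implicit in nonnegative definiteness of $\beff(\bomega)$, and $f_{11}$ is integrable as a spectral density.
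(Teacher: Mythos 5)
Your proposal is correct and follows essentially the same route as the paper's proof: pass to the spectral representation, turn the convolution into multiplication by the Fourier transform of $K$, minimize the resulting pointwise quadratic to get $F_K(\bomega)=f_{12}(\bomega)/f_{22}(\bomega)$, and read off the predictor's density as $|f_{12}|^2/f_{22}=f_{11}|\gamma|^2$. The only difference is that you make explicit the technical points (justifying the interchange of integrals and checking $f_{12}/f_{22}\in L^2(f_{22}\,{\rm d}\bomega)$ via nonnegative definiteness of $\beff(\bomega)$) that the paper leaves implicit.
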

In particular, the relationship (\ref{eq:k.pred.coh}) implies that the optimal 
weighting function is modulated by the coherence between the two processes, 
and indeed has greater spectral weight on frequencies with high coherence.  
An immediate corollary to Theorem \ref{th:prediction} is
\[
  |\gamma(\bomega)|^2 = \frac{f_{1|2}(\bomega)}{f_{11}(\bomega)}.
\]
Thus, the coherence has an attractive interpretation as the amount of 
variability that can be attributed to a linear relationship between two 
processes at a particular frequency.  
In the following development, we use the coherence function to illuminate 
fundamental limitations on some popular multivariate covariance constructions.


The coherence function can be used as a tool to compare proposed multivariate 
models, as an indicator of the amount of flexibility of bivariate relationships 
at differing frequencies.  For example, a rather classic approach to specifying 
covariances is separability, setting $\bC(\bh) = \bR C(\bh)$ where $C(\bh)$ is a 
univariate covariance function and $\bR$ is a $p\times p$ positive definite 
matrix \citep{mardia1993,helterbrand1994,bhat2010}.  This approach has been 
empirically shown to be insufficiently flexible, and the following lemma contributes 
to the empirical results.
\begin{lemma}  \label{lem:separability}
  If $\bC(\bh) = \bR C(\bh)$ where $C:\real^d \to \real$ is a covariance 
  function and $\bR$ is a $p\times p$ positive definite matrix with $(i,j)$th 
  entry $r_{ij}$, then the squared coherence between the $i$th and $j$th process 
  is constant, in particular $\gamma_{ij}(\bomega)^2 = (r_{ij}r_{ji})/(r_{ii}r_{jj})$.
\end{lemma}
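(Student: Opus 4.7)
The plan is to unpack the definitions and recognize that the separable structure $\bC(\bh)=\bR C(\bh)$ forces the matrix $\beff(\bomega)$ to have the same separable form, after which the coherence becomes a frequency-free ratio.

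First I would compute the spectral density matrix of $\bC$. By linearity of the Fourier transform applied to the inversion formula at the end of Section \ref{sec:spectra},
\begin{equation*}
  f_{ij}(\bomega) = \frac{1}{(2\pi)^d}\int_{\real^d} \exp(-i\bomega\T\bh)\, r_{ij} C(\bh)\,{\rm d}\bh = r_{ij} f(\bomega),
\end{equation*}
where $f(\bomega)$ is the (univariate) spectral density associated with $C$. Here I am implicitly assuming $C$ admits a spectral density; otherwise the same factorization holds at the level of the spectral measures, and the argument below is unchanged after replacing densities by Radon--Nikodym derivatives with respect to a common dominating measure.

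Next I would plug this factorization into the definition of the coherence function. On the set $\{\bomega : f(\bomega)>0\}$ (off of which $\gamma_{ij}$ is defined to be zero by convention),
\begin{equation*}
  \gamma_{ij}(\bomega) = \frac{f_{ij}(\bomega)}{\sqrt{f_{ii}(\bomega) f_{jj}(\bomega)}} = \frac{r_{ij} f(\bomega)}{\sqrt{r_{ii} r_{jj}}\, f(\bomega)} = \frac{r_{ij}}{\sqrt{r_{ii} r_{jj}}},
\end{equation*}
which is constant in $\bomega$. Squaring the modulus and using that $\bR$ is Hermitian (positive definiteness forces $r_{ji}=\overline{r_{ij}}$, so $|r_{ij}|^2 = r_{ij}r_{ji}$) gives $|\gamma_{ij}(\bomega)|^2 = (r_{ij}r_{ji})/(r_{ii}r_{jj})$, as claimed. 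Note $r_{ii},r_{jj}>0$ by positive definiteness, so the denominator is nonzero.

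There is essentially no obstacle here; the only mild subtlety is handling frequencies where $f(\bomega)=0$, which is resolved by the convention already spelled out in the definition of $\gamma$. The lemma is really a direct computation that serves to make the structural rigidity of separable cross-covariances transparent in the spectral domain.
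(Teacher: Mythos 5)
Your proof is correct and is exactly the direct computation the lemma rests on: the paper omits a proof of this lemma entirely, treating the factorization $f_{ij}(\bomega)=r_{ij}f(\bomega)$ and the resulting cancellation as immediate. Your handling of the zero set of $f$ and of the Hermitian identity $r_{ij}r_{ji}=|r_{ij}|^2$ fills in the only details worth mentioning.
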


A more sophisticated method of generating multivariate covariance structures is 
to convolve univariate square integrable functions 
\citep{gaspari1999,oliver2003,gaspari2006,majumdar2007}.  
In particular, if $c_i:\real^d\to\real$ are square integrable functions for 
$i=1,\ldots,p$ then $C_{ij}(\bh) = (c_i \star c_j)(\bh)$ is a valid matrix-covariance 
function where $\star$ denotes the convolution operator.  This is 
sometimes known as covariance convolution (especially when $c_i$ are positive 
definite functions to begin with).  
The following proposition suggests this approach 
to model building is overly-restrictive, and indeed implies that the resulting 
coherence is necessarily constant over all frequencies.
\begin{proposition}  \label{prop:c.conv}
  If $c_1$ and $c_2$ are square integrable functions on $\real^d$ 
  and a matrix-valued covariance is defined via 
  $C_{ij} = c_i \star c_j$ for $i,j=1,2$ where $\star$ denotes convolution, 
  then $\gamma(\bomega) \equiv 1$ for all $\bomega\in\real^d$ such that 
  the Fourier transforms of $c_i$ and $c_j$ are nonzero.
\end{proposition}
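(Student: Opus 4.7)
The plan is to apply the convolution theorem to reduce the statement to a one-line algebraic identity between Fourier transforms. Write $\hat{c}_i(\bomega) = (2\pi)^{-d/2}\int_{\real^d}\exp(-i\bomega\T\bu)\,c_i(\bu)\,{\rm d}\bu$ for the Fourier transform of each square integrable $c_i$; since $c_i\in L^2(\real^d)$, the Plancherel theorem guarantees $\hat{c}_i\in L^2(\real^d)$ and in particular $\hat{c}_i(\bomega)$ is defined almost everywhere.

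First I would translate the covariance convolution representation into the spectral domain. By the convolution theorem applied to $C_{ij} = c_i \star c_j$, the Fourier transform of $C_{ij}$ is (up to the $(2\pi)^d$ factor dictated by the convention fixed in the excerpt) proportional to $\hat{c}_i(\bomega)\,\overline{\hat{c}_j(\bomega)}$ — where the complex conjugation arises because the convention that makes the construction produce a valid nonnegative definite matrix-valued covariance is $C_{ij}(\bh) = \int c_i(\bh+\bu)\,\overline{c_j(\bu)}\,{\rm d}\bu$. Thus, matching with the Fourier inversion formula given just after Theorem~\ref{th:cramer}, one identifies
\[
  f_{ij}(\bomega) \;=\; \kappa\,\hat{c}_i(\bomega)\,\overline{\hat{c}_j(\bomega)}
\]
for some positive constant $\kappa$ depending only on $d$ and the Fourier normalization. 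In particular $f_{ii}(\bomega) = \kappa\,|\hat{c}_i(\bomega)|^2 \geq 0$, which also verifies that Cram\'er's nonnegative definiteness condition holds so this really is a bona fide spectral density matrix.

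Next I would plug these expressions directly into the definition of $\gamma(\bomega)$. The numerator becomes $|f_{12}(\bomega)|^2 = \kappa^2 |\hat{c}_1(\bomega)|^2 |\hat{c}_2(\bomega)|^2$, and the denominator $f_{11}(\bomega) f_{22}(\bomega) = \kappa^2 |\hat{c}_1(\bomega)|^2 |\hat{c}_2(\bomega)|^2$, so $|\gamma(\bomega)|^2 = 1$ at every $\bomega$ where both $\hat{c}_1(\bomega)$ and $\hat{c}_2(\bomega)$ are nonzero (which is precisely the set where the defining ratio is not of the indeterminate form $0/0$). A quick computation of the phase of $f_{12}$ relative to $\sqrt{f_{11}f_{22}}$ shows in fact $\gamma(\bomega) = \hat{c}_1(\bomega)\overline{\hat{c}_2(\bomega)}/|\hat{c}_1(\bomega)\overline{\hat{c}_2(\bomega)}| \in \{z\in\complex:|z|=1\}$, and with the natural convention that the square roots are taken to be positive real this simplifies to the claimed identity $\gamma(\bomega)\equiv 1$.

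The only mild obstacle is pinning down the conjugation/reflection convention implicit in the phrase ``$c_i \star c_j$'' so that the diagonal entries $C_{ii}$ are genuine (nonnegative definite) covariances; once that is settled, the convolution theorem immediately collapses the ratio. No delicate analysis is required beyond Plancherel's theorem to justify the pointwise almost-everywhere equalities.
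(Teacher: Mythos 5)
Your proposal is correct and follows the same route as the paper, whose entire proof is the one-line observation that the spectral density of $C_{ij}=c_i\star c_j$ factorizes as the product of the Fourier transforms of $c_i$ and $c_j$, so the coherence ratio collapses to unity wherever both transforms are nonzero. Your additional care about the conjugation/reflection convention needed to make the diagonal entries genuine covariances, and your remark that one literally obtains a unimodular $\gamma(\bomega)$ (so the clean statement is $|\gamma(\bomega)|\equiv 1$ unless $\hat{c}_1\overline{\hat{c}_2}$ is taken real and positive), are points the paper glosses over but do not change the argument.
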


Multivariate processes can sometimes be modeled as being related by local 
averaging.  For example, the relationship between column integrated ozone observations 
and local ozone might be plausibly modeled as observations being locally 
averaged over the true underlying field \citep{cressie2008}.  Wind observations 
are often time averaged over moving windows to produce smoother and more 
stable observation series \citep{hering2015}.  The following proposition 
characterizes the coherence in such situations, and serves to illustrate the 
intimate link between process relationship and coherence.
\begin{proposition}  \label{prop:z.conv}
  If $Z_1(\bs)$ is a weakly stationary stochastic process 
  and $Z_2(\bs) = \int_{\real^d} K(\bu - \bs) Z_1(\bu) {\rm d}\bu$ 
  for some continuous square integrable kernel function $K:\real^d\to\real$ that 
  is symmetric, then $\gamma(\bomega) \equiv 1$ for all $\bomega\in\real^d$ such 
  that the Fourier transform of $K$ is nonzero.
\end{proposition}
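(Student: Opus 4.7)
The plan is to show that $Z_2$ is, spectrally, just a modulation of $Z_1$, so the coherence numerator and denominator differ only by $\tilde K(\bomega)/|\tilde K(\bomega)|$.

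First I would compute the two relevant covariances in the spatial domain. By stationarity of $Z_1$,
\[
  C_{12}(\bh) = \E\!\left[Z_1(\bs+\bh) \int_{\real^d} K(\bu-\bs)\overline{Z_1(\bu)} {\rm d}\bu\right]
  = \int_{\real^d} K(\bv) C_{11}(\bh-\bv) {\rm d}\bv
\]
after the substitution $\bv = \bu - \bs$; symmetry of $K$ is not yet needed. A similar direct calculation, substituting $\ba = \bu-\bs$ and $\bb = \bv-\bs$, gives
\[
  C_{22}(\bh) = \int_{\real^d}\int_{\real^d} K(\ba-\bh)K(\bb) C_{11}(\ba-\bb) {\rm d}\ba {\rm d}\bb,
\]
i.e.\ the double convolution of $K$ with itself against $C_{11}$.

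Next I would apply the Fourier inversion formula from the Cram\'er representation. Letting $\tilde K(\bomega) = \int_{\real^d} K(\bv) \exp(-i\bomega\T\bv){\rm d}\bv$, a Fubini interchange on each display yields
\[
  f_{12}(\bomega) = \tilde K(\bomega) f_{11}(\bomega), \qquad
  f_{22}(\bomega) = |\tilde K(\bomega)|^2 f_{11}(\bomega).
\]
For the second identity I would use that $K$ is real and symmetric to conclude $\tilde K(-\bomega)=\tilde K(\bomega)\in\real$, so that the two convolutions contribute a factor $\tilde K(\bomega)^2 = |\tilde K(\bomega)|^2$. Square integrability of $K$ makes $\tilde K$ a well-defined $L^2$ function, and mean-square existence of $Z_2$ together with $f_{11}\in L^1$ justifies the interchanges.

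Finally I would substitute into the definition of $\gamma$:
\[
  \gamma(\bomega) = \frac{f_{12}(\bomega)}{\sqrt{f_{11}(\bomega) f_{22}(\bomega)}} = \frac{\tilde K(\bomega) f_{11}(\bomega)}{|\tilde K(\bomega)| f_{11}(\bomega)} = \frac{\tilde K(\bomega)}{|\tilde K(\bomega)|},
\]
valid on the set where $\tilde K(\bomega)\neq 0$ (and $f_{11}(\bomega)>0$, a harmless restriction as in the preceding discussion of coherence). Hence $|\gamma(\bomega)|\equiv 1$ on that set. The only real obstacle is the Fubini step producing $f_{22}$; once the convolution structure is in place, the conclusion is immediate and parallels Proposition~\ref{prop:c.conv}, the underlying principle being that any linear time/space-invariant filtering cannot reduce coherence below unity.
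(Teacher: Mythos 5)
Your proposal is correct and follows essentially the same route as the paper: the paper's proof simply invokes Lemma~\ref{lem:intKZ.relation}, whose (omitted) content is exactly the convolution identities $f_{12}=\tilde K f_{11}$ and $f_{22}=\tilde K^2 f_{11}$ that you derive, after which the coherence collapses to $\tilde K(\bomega)/|\tilde K(\bomega)|$. If anything you are slightly more careful than the stated result, since this ratio equals $-1$ wherever $\tilde K(\bomega)<0$, so the honest conclusion is $|\gamma(\bomega)|\equiv 1$ (equivalently $\gamma\equiv 1$ only where $\tilde K>0$), exactly as you note.
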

According to Proposition \ref{prop:z.conv}, estimated coherences near unity over 
all frequency bands may be indicative of a linear or local averaged 
relationship between processes, and this result may serve as the theoretical 
basis for testing such a hypothesis.  \citet{fuentes2006} uses a similar notion 
to develop a test for separability of space-time processes.

The kernel convolution method, introduced by 
\citet{verhoef1998} and \citet{verhoef2004}, originally involved representing 
a process as a moving average against a white noise process.  In simple cases 
this yields the covariance convolution model.  This can be generalized to 
\begin{align}  \label{eq:k.convolution}
  Z_k(\bs) = \int_{\real^d} g_k(\bx - \bs) W(\bx) {\rm d}\bx
\end{align}
where $W(\bx)$ is a mean zero stationary process with covariance $C(\bh)$, and 
$g_k:\real^d\to\real$ are square integrable symmetric 
kernel functions for $k=1,\ldots,p$.  As with each previous construction, this 
approach also yields constant coherence.
\begin{proposition}  \label{prop:k.conv}
  If $Z_1(\bs)$ and $Z_2(\bs)$ are constructed as in (\ref{eq:k.convolution}), 
  then $\gamma(\bomega)$ is constant for all $\bomega\in\real^d$.
\end{proposition}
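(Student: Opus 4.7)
The plan is a direct spectral computation. The key observation is that under the common-driver construction (\ref{eq:k.convolution}), the bivariate spectral density matrix $\beff(\bomega)$ factors as a rank-one outer product, which is the spectral condition that forces the coherence to be frequency-constant.

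First, I will compute the cross-covariance. From (\ref{eq:k.convolution}), expanding $\E[Z_i(\bs+\bh)\overline{Z_j(\bs)}]$ via Fubini and using the stationarity of $W$,
\begin{align*}
C_{ij}(\bh) = \int_{\real^d}\int_{\real^d} g_i(\by_1)\,g_j(\by_2)\,C(\by_1-\by_2+\bh)\,{\rm d}\by_1\,{\rm d}\by_2.
\end{align*}

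Second, I will take the Fourier transform in $\bh$. The change of variable $\bu=\by_1-\by_2+\bh$ decouples the three integrals; using that each $g_k$ is real and symmetric so that $\hat g_k$ is real and even (in particular $\hat g_k(-\bomega)=\hat g_k(\bomega)$), this yields
\begin{align*}
f_{ij}(\bomega) = \hat g_i(\bomega)\,\hat g_j(\bomega)\,f_W(\bomega),
\end{align*}
where $f_W$ is the spectral density of $W$. Writing $\bv(\bomega)=(\hat g_1(\bomega),\hat g_2(\bomega))\T$, one reads off
\begin{align*}
\beff(\bomega) = f_W(\bomega)\,\bv(\bomega)\bv(\bomega)\T,
\end{align*}
so the spectral density matrix has rank one at every frequency where the entries are nonzero.

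Third, I substitute into the definition of $\gamma$. The $f_W$ factor cancels between numerator and denominator, and $\sqrt{\hat g_k(\bomega)^2}=|\hat g_k(\bomega)|$, so
\begin{align*}
\gamma(\bomega) = \frac{\hat g_1(\bomega)\,\hat g_2(\bomega)}{|\hat g_1(\bomega)|\,|\hat g_2(\bomega)|} \equiv 1
\end{align*}
under the standard kernel-convolution hypothesis that the $g_k$ are positive-definite kernels, since then Bochner's theorem gives $\hat g_k\ge 0$. Thus $\gamma$ does not depend on $\bomega$, which is the proposition's conclusion.

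The only delicate step is the conjugate- and sign-bookkeeping in the Fourier computation: noting that the two factors of $g$ in $C_{ij}$ produce $\hat g_i(-\bomega)$ and $\hat g_j(\bomega)$ (with opposite exponents surviving the substitution), and then collapsing these to the same real-valued $\hat g_k(\bomega)$ via the symmetry of $g_k$. Once the rank-one factorization $\beff=f_W\bv\bv\T$ is in hand, the constancy of $\gamma$ is immediate by substitution.
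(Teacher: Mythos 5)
Your proposal is correct and is essentially the paper's own argument: the factorization $f_{ij}(\bomega)=\hat g_i(\bomega)\hat g_j(\bomega)f_W(\bomega)$ that you derive explicitly is exactly what the paper delegates to Lemma~\ref{lem:intKZ.relation}, after which the cancellation in $\gamma$ is immediate. The one point of divergence is your final appeal to positive-definiteness of the $g_k$ to force $\hat g_k\ge 0$ --- the paper assumes only symmetry and square-integrability, under which your computation gives $\gamma(\bomega)=\hat g_1(\bomega)\hat g_2(\bomega)/\left(|\hat g_1(\bomega)||\hat g_2(\bomega)|\right)=\pm 1$, so strictly only $|\gamma|$ is guaranteed constant; this is a defect of the proposition as stated rather than of your proof, and your added hypothesis is a reasonable way to close it.
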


For all of these models, separable, covariance convolution and kernel convolution, 
the resulting multivariate structure is restricted to constant coherence.  In light of 
Theorem \ref{th:prediction}, this suggests that {\em none} of these models can attain 
optimal prediction for any multivariate processes exhibiting nontrivial coherences.  
Indeed the examples below in Section \ref{sec:example} exhibit nonconstant 
coherence, and call for more flexible modeling frameworks.

\subsection{Phase and Gain}  \label{subsec:phase.gain}

Similar notions to frequency coherence can be motivated by examining 
spectral density matrices.  If $(Z_1(\bs),Z_2(\bs))\T$ is a stationary random 
vector with spectral density matrix $(f_{ij}(\bomega))_{i,j=1}^2$ then define 
$A(\bomega) = f_{12}(\bomega) / f_{11}(\bomega)$.  Note that $A(\bomega)$ is 
possibly complex-valued.  We define the gain function $G(\bomega) = 
|A(\bomega)|$ which is sometimes referred to as the gain of $Z_2(\bs)$ on 
$Z_1(\bs)$ in time series \citep{brockwell2009}.  Additionally define the 
phase function at frequency $\bomega$ as $\phi(\bomega) = \arg A(\bomega)$, the 
complex argument of $A(\bomega)$.  Note that the phase function satisfies 
$\phi(\bomega)\in(-\pi,\pi]$ and $\phi(-\bomega) = -\phi(\bomega)$.  

The interpretations of gain and phase are most clear when considering 
processes built by the relationship $Z_1(\bs) = \alpha Z_2(\bs - \bu)$ for 
some $\bu\in\real^d$ and $\alpha \not= 0$, 
i.e., $Z_1$ is a shifted and rescaled version of $Z_2$.  
Then it is straightforward to show that the phase function is 
\begin{align*}
  \phi(\bomega) = 
  \begin{cases}
    -\bomega\T\bu\,(\hspace{-4.2mm}\mod 2\pi), & \alpha > 0\\
    \pi-\bomega\T\bu\,(\hspace{-4.2mm}\mod 2\pi) & \alpha < 0.
  \end{cases}
\end{align*}
\citet{li2011} develop an approach to modeling this type of asymmetric 
cross-covariance behavior.  This result shows that their construction will 
have phase shift function that depends on the angle 
$\bomega\T\bu$.  This result may be used as an 
exploratory data approach or as the basis for a statistical test to suggest 
whether a pair of spatial processes exhibit an asymmetric relationship; 
\citet{li2011} use the empirical cross-correlation function to visually 
assess such asymmetric behavior.  The gain function in this case 
is simply $G(\bomega) = |\alpha|$; all frequency components of $Z_2$ are 
exaggerated by an amount $\alpha$ for $Z_1$.

Below we consider some multivariate constructions that are particular to 
real-valued processes having real-valued spectral matrices.  Any model with 
real-valued cross-spectral density has $\phi(\bomega)\equiv 0$, but a 
possibly non-trivial gain function.  Thus, testing for $\phi(\bomega)\equiv 0$ 
can be viewed as a test for a real-valued cross-spectral density, which 
seems very relevant given most multivariate models are developed under this 
assumption.

\subsection{Revisiting the Multivariate Mat\'ern}  \label{subsec:mm}

The multivariate Mat\'ern is a model for matrix-valued covariance functions 
such that each process is marginally described by a Mat\'ern covariance 
function, and all cross-covariance functions also fall in the Mat\'ern class 
\citep{gneiting2010,apanasovich2012}.  Specifically, the multivariate Mat\'ern 
imposes $C_{ii}(\bh) = \sigma_i^2 {\rm M}(\bh\,|\,\nu_i,a_i)$ for $i=j$ and 
$C_{ij}(\bh) = \rho_{ij} \sigma_i\sigma_j {\rm M}(\bh\,|\,\nu_{ij},a_{ij})$ 
for $1\leq i\not=j\leq p$.  Here, 
${\rm M}(\bh\,|\,\nu,a) = (2^{1-\nu}/\Gamma(\nu)) (a\|\bh\|)^\nu 
{\rm K}_\nu(a\|\bh\|)$ where $K_{\nu}$ is a modified Bessel function of the 
second kind of order $\nu$.  \citet{gneiting2010} and \citet{apanasovich2012} 
discuss restrictions on the parameters $\nu_{i}, a_i, \nu_{ij}, a_{ij}$ and $\rho_{ij}$ 
that result in a valid model.  

The Mat\'ern class is popular due to the smoothness parameters $\nu > 0$ which 
continuously index smoothnesses of the sample paths of the process.  
In particular, sample paths are $m$ times differentiable if and only if $\nu > m$, 
and there is an additional relationship between $\nu$ and the fractal dimension 
in that sample paths have dimension $\max(d,d+1-\nu)$ \citep{goff1988,handcock1993}.  
These interpretations and implications also hold in the multivariate case, 
where $\nu_i$ indexes the smoothness of the $i$th component process $Z_i(\bs)$.  
The parameters $a_i$ act as range parameters, and control the rate of 
decay of spatial correlation away from the origin.  

A standing issue with the multivariate Mat\'ern is that the cross-covariance 
parameters, $\nu_{ij}$ and $a_{ij}$ for $i\not=j$, do not have straightforward 
interpretations that are analogous to the marginal smoothness and range 
interpretations, and indeed nowhere in the literature have these parameters been 
linked directly to process behavior.  We find that these parameters have 
direct interpretations when considering the coherence function between two 
processes.

The coherence function for a bivariate process with multivariate Mat\'ern 
correlation is 
\begin{align*}
  \gamma(\bomega)^2 = \rho^2 \frac{\Gamma(\nu_{12}+d/2)^2 \Gamma(\nu_1)\Gamma(\nu_2)}
  {\Gamma(\nu_1+d/2) \Gamma(\nu_2+d/2) \Gamma(\nu_{12})^2} 
  \frac{a_{12}^{4\nu_{12}}}{a_1^{2\nu_1}a_2^{2\nu_2}} 
  \frac{(a_1^2 + \|\bomega\|^2)^{\nu_1+d/2} (a_2^2 + \|\bomega\|^2)^{\nu_2+d/2}}
  {(a_{12}^2 + \|\bomega\|^2)^{2\nu_{12}+d}}.
\end{align*}
We explore in detail two simplified versions of this coherency function.  
First, consider the case where $a_1 = a_2 = a_{12} = a$, all covariance 
and cross-covariance functions share a common range.  Then 
\begin{align}  \label{eq:coh.mm.nu}
  \gamma(\bomega)^2 = \rho^2 \frac{\Gamma(\nu_{12}+d/2)^2 \Gamma(\nu_1)\Gamma(\nu_2)}
  {\Gamma(\nu_1+d/2) \Gamma(\nu_2+d/2) \Gamma(\nu_{12})^2} 
  (a + \|\bomega\|^2)^{\nu_1 + \nu_2 - 2 \nu_{12}}.
\end{align}

\begin{figure}[t]
  \centering
  \includegraphics[width=\linewidth]{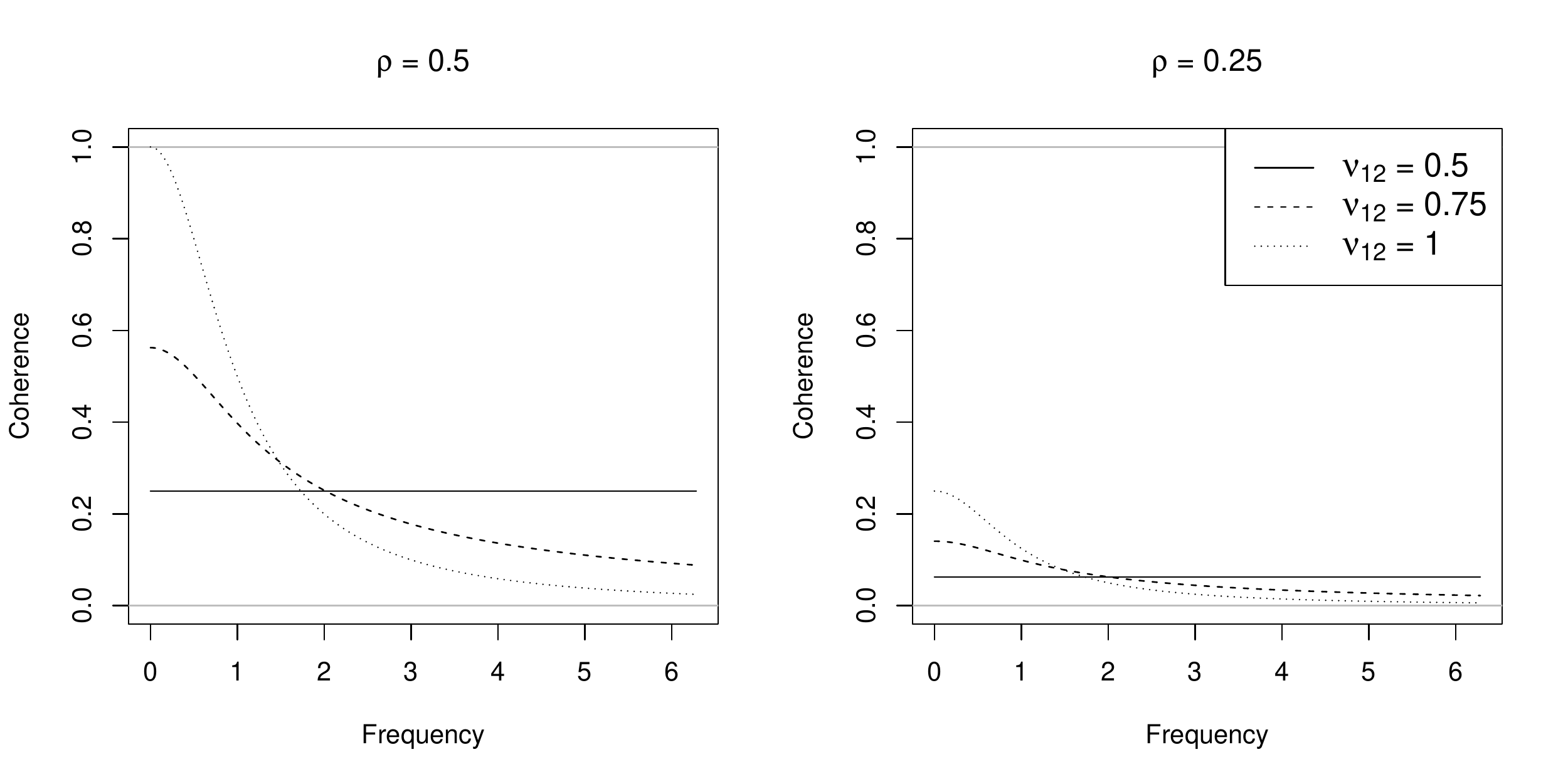}
  \caption{Coherence functions for various bivariate Mat\'erns with $a_{11} = a_{12} = 
  a_{22} = 1, \nu_{11} = \nu_{22} = 0.5$, varying $\nu_{12}$ and $\rho$.
  \label{fig:coh.mm.nu}}
\end{figure}

The multivariate Mat\'ern is a valid model only if $\nu_{12} \geq (\nu_1 + \nu_2)/2$, 
and thus by inspecting (\ref{eq:coh.mm.nu}) we note some modeling implications 
resulting from these restrictions.  
First, if $\nu_{12} = (\nu_1 + \nu_2)/2$, 
the coherency is constant across all frequencies, implying a constant 
linear relationship between two processes at all frequency bands.  Second, 
if $\nu_{12} > (\nu_1 + \nu_2)/2$, we have greater coherency at low frequencies, 
with $\gamma(\bomega) \to 0$ as $\|\bomega\|\to\infty$.  
This analysis seems to suggest a natural interpretation of the cross-covariance 
smoothness in that it controls the amount of cross-process dependence at 
various frequencies, but can only imply greater or equal coherence at 
low frequencies versus high frequencies.  Figure \ref{fig:coh.mm.nu} serves 
to illustrate these points, showing various coherence functions for 
common length scale parameters $a_{11} = a_{12} = a_{22} = 1$, varying 
$\nu_{12}$ and $\rho$.  

Perhaps surprisingly, a similar analysis suggests the cross-covariance 
range parameter $a_{12}$ induces potentially greater flexibility in the 
coherence function than the smoothness parameter $\nu_{12}$.  In particular, 
if $\nu_1 = \nu_2 = \nu_{12} = \nu$, the coherence function is 
\begin{align}  \label{eq:coh.mm.a}
  \gamma(\bomega)^2 = \rho^2 
  \left(\frac{a_{12}^2}{a_1 a_2}\right)^{2\nu}
  \left(\frac{(a_1^2 + \|\bomega\|^2) (a_2^2 + \|\bomega\|^2)}
  {(a_{12}^2 + \|\bomega\|^2)^{2}}\right)^{\nu + d/2}.
\end{align}
Examining (\ref{eq:coh.mm.a}), we see that, depending on whether $a_{12} \leq 
\min(a_1,a_2)$ or $a_{12} \geq \max(a_1,a_2)$ the coherence will have 
distinct behavior.  For simplicity, set $a_1 = a_2 = a$.  
If $a_{12} < a$, the coherence will be greater for small frequencies than 
high frequencies, similar to the behavior implied by (\ref{eq:coh.mm.nu}) 
when $\nu_{12} > (\nu_1 + \nu_2)/2$.  However, here $\gamma(\bomega) \to 
\rho a_{12}^{2\nu} / (a_1 a_2)^\nu$ 
as $\|\bomega\|\to\infty$, implying non-negligible coherence between 
processes at high frequencies, unlike that in (\ref{eq:coh.mm.nu}).  
If $a_{12} > a$, we have the complementary result that 
$\gamma(\bomega_1) < \gamma(\bomega_2)$ for $\|\bomega_1\| < \|\bomega_2\|$, 
that is, two processes share behavior at high frequencies rather than low.  
Figure \ref{fig:coh.mm.a} illustrates these scenarios, and seems to 
suggest that, at least as far as coherence is concerned, the cross-covariance 
range parameter yields potentially greater flexibility than the cross-smoothness.

\begin{figure}[t]
  \centering
  \includegraphics[width=\linewidth]{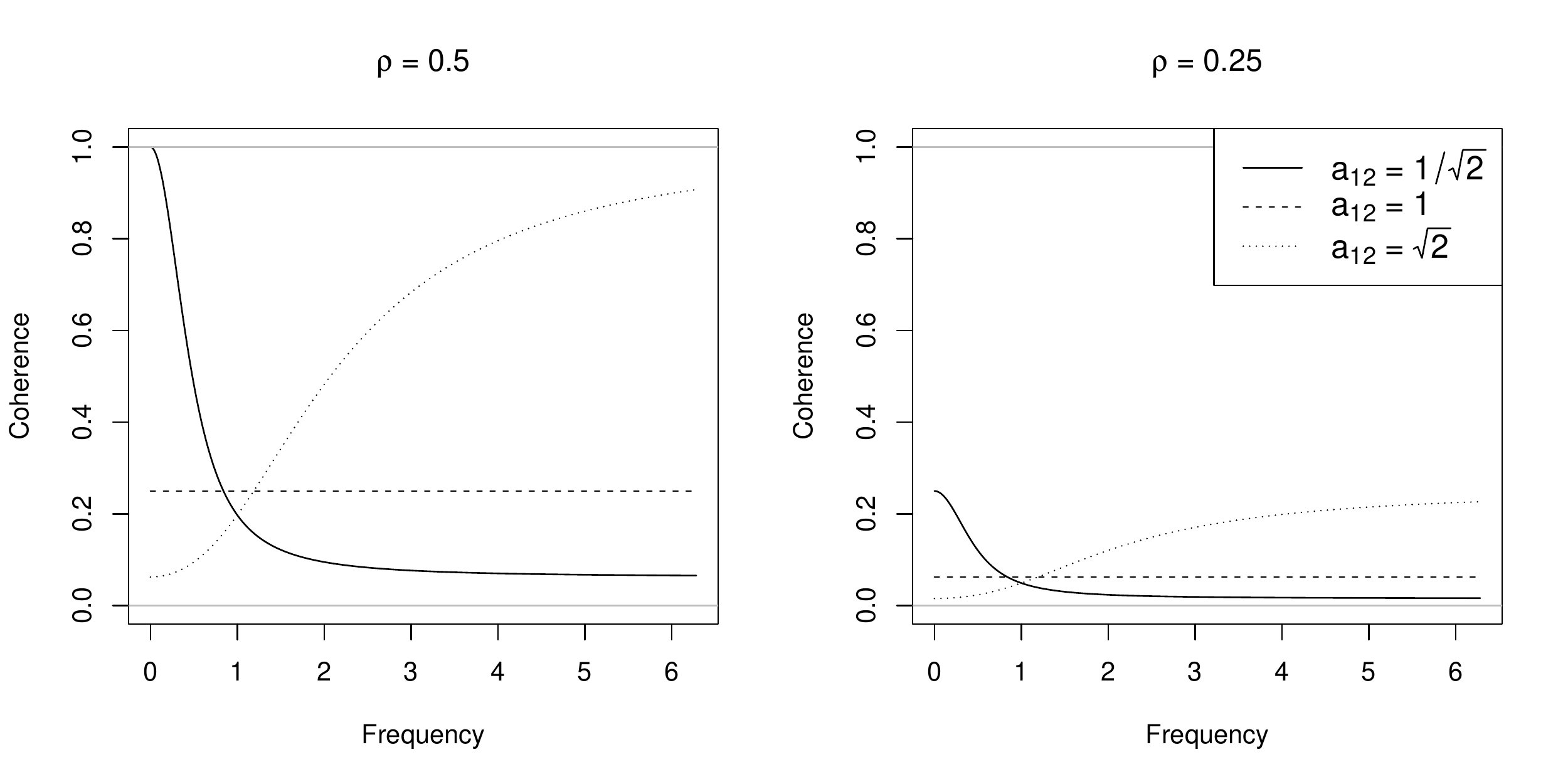}
  \caption{Coherence functions for various bivariate Mat\'erns with $\nu_{11} = 
  \nu_{12} = \nu_{22} = 1, a_{11} = a_{22} = 1$, varying $a_{12}$ and $\rho$.
  \label{fig:coh.mm.a}}
\end{figure}

The so-called parsimonious Mat\'ern model is defined by 
imposing common range parameters as well as $\nu_{12} = (\nu_1 + \nu_2)/2$ 
\citep{gneiting2010}.  This model has been empirically shown to produce inferior 
model fits to datasets as compared to more general versions of the multivariate 
Mat\'ern as well as other multivariate classes \citep{gneiting2010,apanasovich2012}.  
The coherence function for a bivariate parsimonious Mat\'ern model is constant, 
$\gamma(\bomega) = \rho$, which suggests an inflexible model for the spectral 
behavior of spatial processes.

We close this section with an empirical illustration of the implications of 
cross-covariance parameter choice on random field realizations and the associated 
low and high frequency behavior.  We simulate two bivariate Mat\'ern models  
on an equally-spaced grid of $256\times 256$ in $[0,8]^2$.  In both cases we 
low-pass and high-pass filter the resulting bivariate field.  The low-pass 
filter is a matrix of zeros except for a $3 \times 3$ grid of $1/9$, while the 
high-pass filter is similar with a $3 \times 3$ grid of $-1/9$ on the edge and 
$8/9$ in the center.  The effect of the filters is to remove high frequency 
behavior (low-pass filtering) or low frequency behavior (high-pass filtering).

Figure \ref{fig:filt.mm} shows low-pass filtered realizations of two 
bivariate Gaussian processes with bivariate Mat\'ern covariances.  
The top row is the case with equal ranges $a_{11} = a_{12} = a_{22} = 1$ but 
with a greater cross-smoothness $\nu_{11} = \nu_{22} = 0.5, \nu_{12} = 1$.  
According to Figure \ref{fig:coh.mm.nu}, we should expect the realizations 
to show similar low frequency behavior, but dissimilar high frequency behavior.  
Indeed, panels (a) and (b) show similar low frequency behavior, while 
the pairwise scatterplot of high-passed values, panel (c), suggests little 
correlation at high frequencies.  
Complementary, the second row is the case with equal smoothnesses, 
$\nu_{11} = \nu_{12} = \nu_{21} = 1$ but a greater cross-range parameter, 
$a_{11} = a_{22} = 1, a_{12} = \sqrt{2}$.  Figure \ref{fig:coh.mm.a} suggest 
we should expect less coherence at low frequencies while having greater 
correlation at high frequencies.  Again, these theoretical results are 
reinforced: panels (d) and (e) are not suggestive of strong low frequency 
coherence, while panel (f) exhibits positively correlated high 
frequency characteristics (and indeed has an empirical correlation coefficient of 
approximately $0.25$).

\begin{figure}[t]
  \centering
  \includegraphics[width=\linewidth]{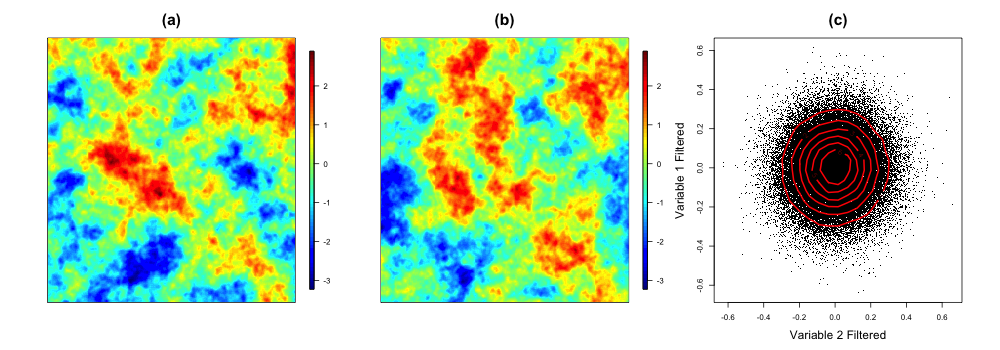}
  \includegraphics[width=\linewidth]{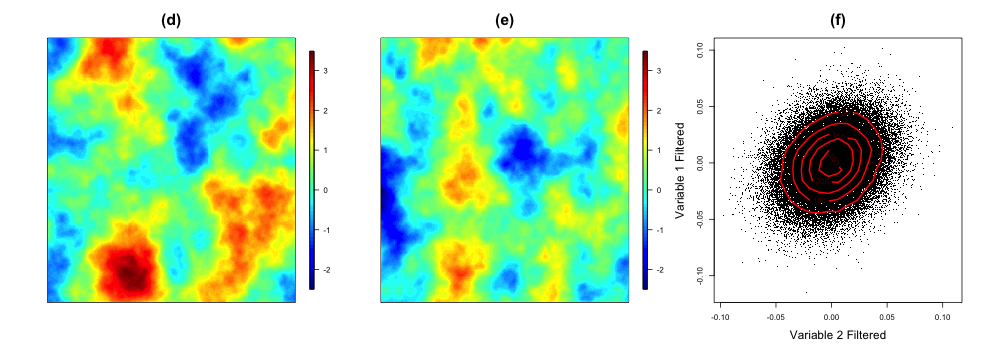}
  \caption{Low-pass filtered bivariate Mat\'ern with $a_{11} = a_{12} = 
  a_{22} = 1, \nu_{11} = \nu_{22} = 0.5, \nu_{12}=1$ and $\rho=0.5$ for the 
  (a) first process, (b) second process.  Panel (c) is the pairwise scatterplot of 
  high-pass filtered values of the two processes with contour levels for comparison. 
  Panels (d), (e) and (f) are analogous plots for a bivariate Mat\'ern simulation 
  with  $\nu_{11} = \nu_{12} = \nu_{22} = 1, a_{11} = a_{22} = 1, a_{12}=\sqrt{2}$ 
  and $\rho=0.5$.
  \label{fig:filt.mm}}
\end{figure}

\subsection{The Linear Model of Coregionalization}  \label{subsec:lmc}

The linear model of coregionalization (LMC) is a competing framework for multivariate 
modeling, and is built by decomposing a multivariate process as linear 
combinations of uncorrelated, univariate processes 
\citep{goulard1992,royle1999,wackernagel2003,schmidt2003}.  
In particular, we entertain the following version,
\begin{align}  \label{eq:LMC}
  \bZ(\bs) = 
  \left(
  \begin{array}{c}
    Z_1(\bs) \\
    Z_2(\bs)
  \end{array}
  \right)
  = 
  \left(
  \begin{array}{cc}
    b_{11} & b_{12} \\
    b_{21} & b_{22}
  \end{array}
  \right)
  \left(
  \begin{array}{c}
    W_1(\bs) \\
    W_2(\bs)
  \end{array}
  \right)
  = 
  \bB \bW(\bs)
\end{align}
The matrix $\bB$ is known as the coregionalization matrix, and controls the 
strength of dependencies on the latent uncorrelated processes $\bW$.  For 
the following, suppose $W_1(\bs)$ and $W_2(\bs)$ are uncorrelated processes 
with spectral densities $f_1(\bomega)$ and $f_2(\bomega)$, respectively.

Given the number of parameters in the LMC, it is often useful to impose 
restrictions on the coregionalization matrix $\bB$, such as setting 
$b_{11} = b_{22} = 1$ \citep{berrocal2010}.  
The following Lemma is the unsurprising result that the LMC yields multivariate 
processes that are exactly coherent when the coregionalization matrix has 
zero determinant.
\begin{lemma}  \label{lem:LMC.coh}
  In the linear model of coregionalization (\ref{eq:LMC}), if $b_{11} = b_{22} = 1$ 
  then the coherence function between $Z_1(\bs)$ and $Z_2(\bs)$ is unity if 
  and only if $b_{12} b_{21} = 1$.
\end{lemma}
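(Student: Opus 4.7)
The plan is to directly compute the spectral density matrix of $\bZ$ implied by (\ref{eq:LMC}) and reduce the equation $|\gamma(\bomega)|=1$ to a polynomial identity in the marginal densities $f_1, f_2$ of $W_1$ and $W_2$.

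First, since $b_{11}=b_{22}=1$, the components are $Z_1(\bs) = W_1(\bs) + b_{12} W_2(\bs)$ and $Z_2(\bs) = b_{21} W_1(\bs) + W_2(\bs)$. Using that $W_1$ and $W_2$ are uncorrelated, linearity of the covariance and of the Fourier transform yield
\begin{align*}
  f_{11}(\bomega) &= f_1(\bomega) + b_{12}^2 f_2(\bomega), \\
  f_{22}(\bomega) &= b_{21}^2 f_1(\bomega) + f_2(\bomega), \\
  f_{12}(\bomega) &= b_{21} f_1(\bomega) + b_{12} f_2(\bomega).
\end{align*}
The coefficients are real, so $f_{12}$ is real-valued and $|f_{12}|^2 = f_{12}^2$.

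The key step is to expand $f_{11}(\bomega) f_{22}(\bomega) - f_{12}(\bomega)^2$. The $f_1^2$ and $f_2^2$ contributions cancel exactly, and the cross terms collapse to
\[
  f_{11}(\bomega) f_{22}(\bomega) - f_{12}(\bomega)^2 \;=\; \bigl(1 - b_{12} b_{21}\bigr)^2 f_1(\bomega) f_2(\bomega).
\]
The ``if'' direction is then immediate: setting $b_{12} b_{21} = 1$ makes the right-hand side vanish identically, so $f_{12}^2 = f_{11} f_{22}$ and hence $|\gamma(\bomega)|=1$ wherever the denominator is positive.

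For the converse, suppose $|\gamma(\bomega)| \equiv 1$ on the support of $f_{11} f_{22}$. The factorization above forces $(1 - b_{12} b_{21})^2 f_1(\bomega) f_2(\bomega) = 0$ wherever $f_{11} f_{22} > 0$. The claim is vacuous unless $W_1$ and $W_2$ are genuinely nondegenerate, so $f_1 f_2 > 0$ on a set of positive Lebesgue measure; this forces $(1 - b_{12} b_{21})^2 = 0$, i.e., $b_{12} b_{21} = 1$. There is no serious obstacle in the argument --- everything reduces to spotting the perfect-square factorization in the algebraic expansion. The one subtlety worth flagging is that ``coherence equal to unity'' must be interpreted on the set where $f_{11} f_{22} > 0$; degenerate configurations of $W_1, W_2$ must be ruled out by the standing assumption (implicit in the spectral setup of Section \ref{sec:spectra}) that the latent processes have nontrivial spectral mass.
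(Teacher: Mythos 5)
Your computation is correct: the spectral matrix of $\bZ$ is $\bB\,\mathrm{diag}(f_1,f_2)\,\bB\T$, so $f_{11}f_{22}-|f_{12}|^2=(\det\bB)^2 f_1 f_2=(1-b_{12}b_{21})^2 f_1 f_2$, and your perfect-square factorization is exactly the determinant identity the paper alludes to in the surrounding text (``zero determinant'' / ``differ only by a scalar multiplier''). The paper omits a formal proof of this lemma, but your argument, including the caveat that unit coherence is interpreted on the set where $f_{11}f_{22}>0$ and that $f_1 f_2$ must not vanish identically, is the intended one and is complete.
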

\noindent Note that this result simply states that, under the LMC, two processes 
are exactly coherent when they differ only by a scalar multiplier.

Under the same working assumptions, $b_{11} = b_{22} = 1$, we have the gain 
function of $Z_2(\bs)$ on $Z_1(\bs)$ is 
\[
  G(\bomega) = \frac{b_{21} f_1(\bomega) + b_{12} f_2(\bomega)}
  {f_1(\bomega) + b_{12}^2 f_2(\bomega)}.
\]
If, as is common in using the LMC, we set $b_{12} = 0$, we have the gain function 
is simply $b_{21}$; that is, there is constant gain at all frequencies 
by the amount of coregionalization, $b_{21}$.  The 
complementary case where $b_{21} = 0$ yields the gain 
\[
  G(\bomega) = \frac{b_{12}f_2(\bomega)}{f_1(\bomega) + b_{12}^2 f_2(\bomega)},
\]
that is, the relative contribution of component $b_{12} W_2(\bs)$ to the combined 
spectrum of $Z_1(\bs)$.  As mentioned previously, if the latent processes have 
real-valued spectral densities, the phase function is exactly zero at all 
frequencies.

\section{Estimation of Spectra}  \label{sec:periodogram}

Suppose $\bZ(\bs)$ is a $p$-variate process that has been observed at a 
regular grid of points $\{\bs_i\}_{i=1}^N$, of marginal dimensions $n_i, 
i=1,\ldots,d$ where $N = \prod_{i=1}^d n_i$.  If grid spacing in the 
$i$th dimension is $\delta_i$, define $\delta = \prod_{i=1}^d \delta_i$.  
Then the spatial periodogram matrix with $(k,\ell)$th entry is defined as 
$\bI(\bomega) = (I_{k\ell}(\bomega))_{k,\ell=1}^p$ where 
\begin{align}  \label{eq:I}
  I_{k\ell}(\bomega) = \frac{\delta}{(2\pi)^p N} 
  \left( \sum_{k=1}^N Z_k(\bs_k) \exp(-i \bs_k\T \bomega) \right)
  \overline{\left( \sum_{k=1}^N Z_\ell(\bs_k) \exp(-i \bs_k\T \bomega) \right)}
\end{align}
is available at Fourier frequencies $\bomega = 2\pi\beff$ where 
$\beff=(f_1/(\delta_1 n_1),\ldots,f_d/(\delta_d n_d))\T$ for 
$f_i\in\{-\lfloor (n_i-1)/2\rfloor,\ldots,n_i-\lfloor n_i/2\rfloor\}$.  
Note that $I_{k\ell}(\bomega) = \overline{I_{\ell k}(\bomega)}$.  

Whereas in the time series literature it is natural to consider asymptotics 
as time $t\to\infty$, resulting in effectively uncorrelated blocks of a process, 
in the spatial realm there are two competing asymptotic frameworks.  
Increasing domain asymptotics is similar to the time series case where samples 
are taken on an ever-increasing domain in all axial directions, and typically 
asymptotic results here echo those in time series.  The 
complementary version is infill asymptotics (sometimes called fixed-domain 
asymptotics) in which the domain boundary is fixed and points are sampled at 
an ever finer resolution within the domain \citep{zhang2005}.  
Depending on the asymptotic framework under consideration, the large sample 
properties of the periodogram (\ref{eq:I}) change.  

Using infill asymptotics, \citet{lim2008} show that the raw multivariate 
periodogram can exhibit bias at low frequencies, and suggest prewhitening 
the process to overcome this inadequacy (in the univariate case \citet{stein1995} 
gives a simulated example where the bias is quite substantial).  However, 
under a mixture of infill and increasing domain asymptotics, \citet{fuentes2002} 
showed (for univariate processes) the analogous result to the time series case 
that the periodogram is asymptotically unbiased and is uncorrelated at 
differing Fourier frequencies.  Additionally, in this latter case it is not a 
consistent estimator, but must be smoothed to gain consistency.  

Analogous to the time series and univariate spatial field case, under certain 
assumptions the nonparametric periodogram (\ref{eq:I}) is asymptotically unbiased, 
and generates asymptotically 
uncorrelated random variables between distinct Fourier frequencies.  The 
following theorem illustrates this feature of the matrix-valued periodogram.
We use the same assumptions to \citet{fuentes2002}, generalized to the 
multivariate setting.
\begin{itemize}
  \item[A1]
    The true spectral densities $f_{k\ell}(\bomega)$ decay as $\|\bomega\|^{\tau}, 
    \tau>2$ as $\|\bomega\|\to\infty$, $\bomega\in\real^2$.
  \item[A2]
    The marginal and cross-covariances satisfy
    $\int \|\bh\| |C_{k\ell}(\bh)| {\rm d}\bh < \infty, \bh\in\real^2$.
  \item[A3]
    $\delta_i\to 0$, $n_i\to\infty$ and $\delta_i n_i\to\infty$ 
    for all $i,j=1,\ldots,d$ such that $n_i/n_j \to \lambda_{ij}>0$.
\end{itemize}
\begin{theorem}  \label{th:asymp}
  Under the assumptions A1-A3, we have 
  \begin{itemize}
    \item[(i)]
      $\E I_{k\ell}(\bomega) \to f_{k\ell}(\bomega)$, 
    \item[(ii)]
      $\Var I_{k\ell}(\bomega) = f_{k\ell}(\bomega)^2$ and 
    \item[(iii)]
      $\Cov(I_{k\ell}(\bomega_1),I_{k\ell}(\bomega_2))\to 0$ for $\bomega_1\not=
      \bomega_2$.
  \end{itemize}
\end{theorem}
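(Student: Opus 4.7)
My plan is to follow the strategy of \citet{fuentes2002} for univariate spatial periodograms, generalized to the matrix-valued case. Writing $J_k(\bomega) = \sum_{j=1}^N Z_k(\bs_j)\exp(-i\bs_j\T\bomega)$ so that $I_{k\ell}(\bomega) = \frac{\delta}{(2\pi)^d N} J_k(\bomega)\overline{J_\ell(\bomega)}$, all three parts reduce to manipulations of finite sums of cross-covariance entries. Parts (i) and (iii) amount to Riemann-sum convergence toward Fourier integrals, whereas (ii) requires a fourth-moment decomposition that I will carry out under the (implicit) Gaussian assumption used in \citet{fuentes2002}.

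For part (i) I expand
\[
  \E I_{k\ell}(\bomega) = \frac{\delta}{(2\pi)^d N}\sum_{j=1}^N \sum_{m=1}^N C_{k\ell}(\bs_j-\bs_m)\exp\bigl(-i(\bs_j-\bs_m)\T\bomega\bigr),
\]
and reindex by $\bh=\bs_j-\bs_m$. The number of lattice pairs producing a given $\bh$ equals $N$ up to a boundary term of order $\sum_i |h_i|/(\delta_i n_i)$. Under A3 the ratios $\delta_i n_i\to\infty$, and under A2 the integrability of $\|\bh\||C_{k\ell}(\bh)|$ makes the boundary contribution vanish. The principal term is a Riemann sum with volume element $\delta$ of $C_{k\ell}(\bh)\exp(-i\bh\T\bomega)$ over an expanding grid; assumption A1 together with A2 gives the uniform integrability needed for it to converge to $(2\pi)^{-d}\int_{\real^d} C_{k\ell}(\bh)\exp(-i\bh\T\bomega)\,{\rm d}\bh = f_{k\ell}(\bomega)$.

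For parts (ii) and (iii) I use the complex-Gaussian Isserlis identity to expand
\[
  \E\bigl\{J_k(\bomega_1)\overline{J_\ell(\bomega_1)}\,\overline{J_k(\bomega_2)}J_\ell(\bomega_2)\bigr\}
\]
into a sum of products of second-order moments. One term reproduces $\E I_{k\ell}(\bomega_1)\cdot\overline{\E I_{k\ell}(\bomega_2)}$ and is removed by centering in the covariance. The remaining terms factor into objects of the form $\E(J_a(\bomega_1)\overline{J_b(\bomega_2)})$, each of which is, after the same reindexing as in (i), a Riemann sum of $C_{ab}(\bh)\exp(-i\bh\T\bomega_2)$ against the Fejér-type kernel $\sum_j \exp(i\bs_j\T(\bomega_2-\bomega_1))$. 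At distinct Fourier frequencies this exponential sum vanishes identically, forcing these cross-terms to zero and delivering (iii); at $\bomega_1=\bomega_2=\bomega$ it equals $N$, and the limiting products of $f_{ab}(\bomega)$ combine, via $f_{k\ell}=\overline{f_{\ell k}}$, to the stated value $f_{k\ell}(\bomega)^2$.

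The main obstacle is the bookkeeping in (ii): there are several Wick pairings in the multivariate fourth moment, and one must show that after Isserlis expansion and the Riemann-sum limit the contributions collapse to the single expression $f_{k\ell}(\bomega)^2$ rather than the more familiar $f_{kk}(\bomega)f_{\ell\ell}(\bomega)+|f_{k\ell}(\bomega)|^2$ that appears in the increasing-domain time-series literature. This hinges on the conjugate symmetry $f_{k\ell}(\bomega)=\overline{f_{\ell k}(\bomega)}$ together with the Hermitian structure of $\beff(\bomega)$, and on verifying that the Fejér-kernel approximations are uniform in $\bomega$ across a compact set bounded away from the boundary of the Fourier-frequency grid, for which A1 (decay $\|\bomega\|^{-\tau}$ with $\tau>2$) supplies the necessary tail control.
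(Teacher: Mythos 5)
The paper itself offers no written proof of this theorem; it states only that the result ``follows directly from'' \citet{fuentes2002}. Your overall strategy---reindexing the double sum by lag, controlling the boundary terms with A2--A3, and using a Gaussian (Isserlis/Wick) fourth-moment expansion for the second-order properties---is exactly the route one would take to generalize Fuentes's univariate argument, and your treatment of part (i) is sound.

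The genuine gap is in part (ii), and you have in fact put your finger on it without resolving it. Carrying out the Wick expansion for a circularly symmetric Gaussian vector gives
\[
\E\bigl|J_k(\bomega)\overline{J_\ell(\bomega)}\bigr|^2=\bigl|\E\bigl(J_k(\bomega)\overline{J_\ell(\bomega)}\bigr)\bigr|^2+\E|J_k(\bomega)|^2\,\E|J_\ell(\bomega)|^2,
\]
so after subtracting $|\E I_{k\ell}(\bomega)|^2$ the limiting variance is $f_{kk}(\bomega)f_{\ell\ell}(\bomega)$, not $f_{k\ell}(\bomega)^2$. The conjugate symmetry $f_{k\ell}=\overline{f_{\ell k}}$ cannot convert $f_{kk}f_{\ell\ell}$ into $f_{k\ell}^2$; indeed $f_{k\ell}(\bomega)^2$ is in general complex, while a variance must be real and nonnegative, so the identity you assert cannot hold for $k\neq\ell$. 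Your closing paragraph concedes that the ``familiar'' second moment is $f_{kk}f_{\ell\ell}+|f_{k\ell}|^2$ but then defers the reconciliation to unspecified bookkeeping hinging on Hermitian structure; that deferral is the missing step, and it cannot be filled in as stated. Either complete the computation honestly (yielding $f_{kk}f_{\ell\ell}$, which correctly reduces to $f_{kk}^2$ in the marginal case $k=\ell$ and matches the standard time-series result), or read (ii) as a statement about the marginal periodograms only. A secondary, more minor point: in part (iii) the exponential sum $\sum_m\exp(i\bs_m\T(\bomega_2-\bomega_1))$ vanishes exactly at distinct Fourier frequencies only when taken over the full grid; after reindexing by lag the inner sum runs over a lag-dependent sub-rectangle and is merely $o(N)$, so the covariance tends to zero rather than vanishing identically---your conclusion survives, but ``vanishes identically'' overstates it.
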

The proof for Theorem \ref{th:asymp} follows directly from \citet{fuentes2002} 
and is not included here.

According to Theorem \ref{th:asymp}, the matrix-valued periodogram is not 
an asymptotically consistent estimator.  To produce a consistent estimator 
of the spectral density at a particular frequency $\bomega_0$, in practice we 
locally smooth adjacent periodogram values and appeal to property (iii) 
of Theorem \ref{th:asymp}.  In particular, the smoothed matrix-valued 
periodogram is 
\begin{align}  \label{eq:I.smooth}
  \tilde{I}_{k\ell}(\bomega_0) = \int_{\real^d} K_\lambda(\bomega - \bomega_0) 
  I_{k\ell}(\bomega) {\rm d}F_n(\bomega)
\end{align}
where $F_n(\bomega)$ is the empirical cumulative distribution function of 
Fourier frequencies $\{\bomega_i\}_{i=1}^N$.  Here, $K_\lambda$ is some kernel 
function with bandwidth $\lambda$, where, as we have it written, the same kernel 
is applied to each process.  Naturally, different kernels may be used for 
different processes if the scientific context calls for such an approach.

Note that we can't directly use the nonparametric periodogram fraction to 
estimate the coherence as $I_{k\ell}(\bomega)I_{\ell k}(\bomega) = 
I_{kk}(\bomega) I_{\ell\ell}(\bomega)$ at all Fourier frequencies.  Thus, we 
estimate the coherence functions by using the smoothed periodograms, 
\begin{align*}
  \hat{\gamma}_{k\ell}(\bomega)^2 = 
  \frac{|\tilde{I}_{k\ell}(\bomega)|^2}
  {\tilde{I}_{kk}(\bomega) \tilde{I}_{\ell\ell}(\bomega)}
\end{align*}
for $k,\ell=1,\ldots,p$.

\section{Illustrations}  \label{sec:example}

We examine two datasets from the atmospheric sciences, gridded reforecasts and 
reanalyses of sea level pressure and geopotential heights over the equatorial 
region.  
Reforecast data are produced retrospectively from a fixed version of a numerical 
weather prediction model, in this case the 2nd generation National 
Oceanic and Atmospheric Administration's (NOAA) Global Ensemble Forecast System 
Reforecast \citep{hamill2013}.  
Forecasts are generated at 3 hour increments from $0$ to $192$ hours, with the 
0h forecast being a reanalysis, that is, an estimate of the 
current state of the atmosphere.  For the data below, the control initial 
conditions were produced using a hybrid ensemble Kalman filter-variational 
analysis system \citep{hamill2011}.

\subsection{Sea Level Pressure}  \label{subsec:SLP}

The first dataset we consider is a set of reforecast sea level pressures (SLP) 
over the equatorial region.  Sea level pressures in this region are approximately 
stationary, and we compare forecast horizons in 24 hour increments from 
0h to 192h (8 days out).  The data consist of gridded reforecasts from the 
first 90 days of 2014 at $1^{\circ}$ increments over 360 longitude and 
47 latitude bands between $-23^\circ$ to $23^\circ$ defining the equatorial region.  

One approach to examining the quality of forecasts is the coherence between 
the forecast with the corresponding reanalysis.  For example, we might compare 
the 24h forecast of SLP generated on January 1, 2014 to the 0h reanalysis 
generated on January 2, 2014.  It is well known that forecast skill 
decays with horizon, and we expect the short-term forecasts to share higher 
coherence with the reanalyses than the long-term forecasts.  

We begin by standardizing each analysis and forecast horizon grid cell by 
subtracting the temporal average and diving by empirical standard standard deviation 
to produce forecast anomalies.  Denote these anomalies by 
$Z_k(\bs,d)$ for forecast horizons $k=0,1,2,\ldots,8$ corresponding to 
forecast horizons $0,24,\ldots,192$ hours, spatial locations 
$\bs\in{\cal D}\subset\real^2$ in the equatorial region ${\cal D}$ on days 
$d=1,\ldots,90$, i.e., the first 90 days of 2014.  

Each day's marginal process empirical periodogram (\ref{eq:I}) is calculated 
for all forecast horizons $k$, yielding $\{I_{kk}(\bomega,d)\}$.  The 
smoothed periodogram is a convolution with a simple low-pass filter, a matrix 
of zeros with a $3 \times 3$ constant block of $1/9$.  Interest focuses on 
comparing various forecast horizons with the reanalysis at $k=0$, so we 
calculate empirical cross-periodograms $\{I_{0k}(\bomega,d)\}$ for all available 
days $d$ allowing for forecast validation (e.g., the $k=1$, 24h horizon, has 89 
available days, $d=2,\ldots,90$).  The cross-periodograms are smoothed using the same 
low-pass filter as the marginals.  If $\tilde{I}_{k\ell}(\bomega,d)$ denotes 
the smoothed cross-periodograms, we estimate the squared coherence function as 
\[
  \hat{\gamma}_{0k}(\bomega)^2 = \frac{1}{90 - k}\sum_{d=1+k}^{90} 
  \frac{|\tilde{I}_{0k}(\bomega,d)|^2}{\tilde{I}_{00}(\bomega,d) 
  \tilde{I}_{kk}(\bomega,d-k)},
\]
for $k=1,\ldots,8$, that is, the average over all available daily 
smoothed cross-periodograms.

\begin{figure}[t]
  \centering
  \includegraphics[width=\linewidth]{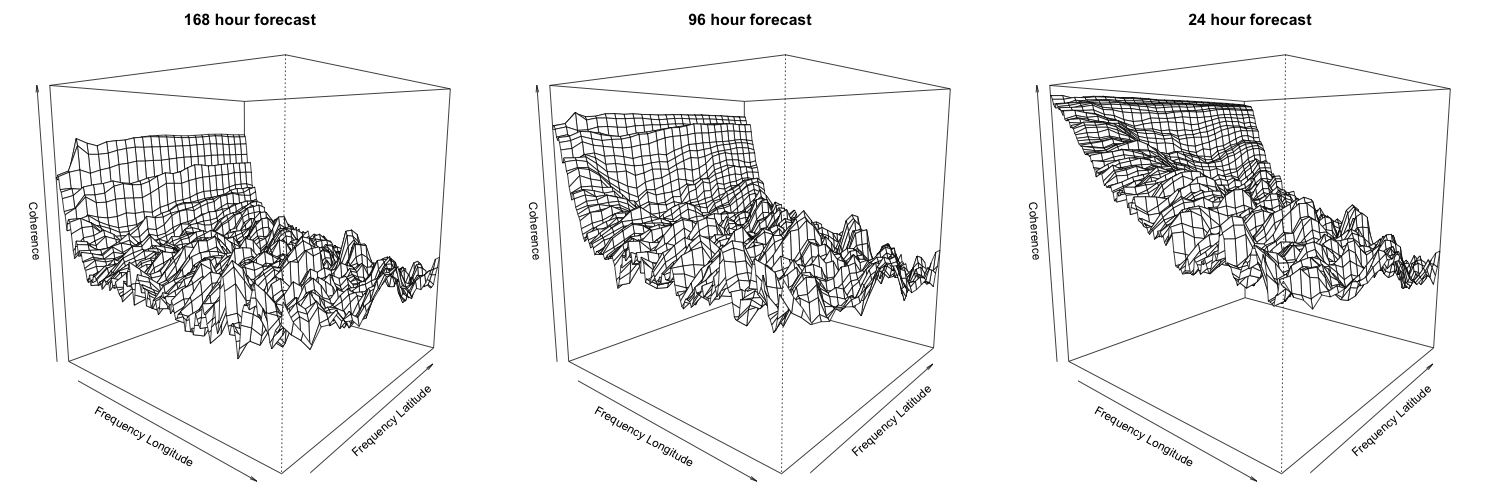}
  \caption{Estimated absolute coherence functions for the GEFS sea level pressure 
  reforecast data, comparing the 168, 96 and 24 hour forecast horizons with 
  the zero hour analysis.  The vertical axis spans $[0,1]$.
  \label{fig:GEFS.pres.coh}}
\end{figure}

Figure \ref{fig:GEFS.pres.coh} shows estimated absolute coherence functions 
for horizons 168, 92 and 24h with the 0h analysis.  Even at long lead lead times 
there is substantial coherence, which increases by a substantial margin 
at very low longitudinal frequencies.  For any given longitude frequency band, 
the coherence appears to be relatively constant across latitudes, which 
is sensible given that there appears to be 
greater variability in the equatorial direction 
than in the north-south direction for sea level pressure in this region.  
As the forecast horizon decreases the coherence begins building between 
low-to-mid frequency bands in the latitudinal direction, suggesting 
that the statistical characteristics of short term forecasts are more similar 
to observed sea level pressure than the longer term forecasts.  
However, note that at the highest frequencies there is not a substantial 
improvement in forecast skill, bordering on no improvement, which 
suggests that small scale events are difficult to forecast even at one day out.

To quantify the differences in forecast horizon skill, we estimate a set of 
bivariate Mat\'ern models.  As each forecast and analysis arises from the 
same physical model, we assume that the marginal spectra follow the same 
statistical behavior, that is, we suppose the marginal spectral densities at equal 
at all horizons, $f_{00}(\bomega) = f_{kk}(\bomega)$.  Suggested by 
Figure \ref{fig:GEFS.pres.coh} and exploratory analysis for the marginal 
spectra, we additionally suppose the spectrum is constant across latitude 
frequencies.  A Gaussianity assumption does not appear to be justifiable, 
based on empirical Q-Q plots.  Thus, we follow the suggestion of \citet{fuentes2002} 
and estimate marginal Mat\'ern parameters by minimizing 
the squared difference between theoretical log spectral density and 
log average periodogram, having averaged over all days, forecast horizons and 
latitude bands.  The resulting least squares estimates are $a = 0.074$ and 
$\nu = 0.94$.  Cross-covariance parameters are estimated by minimizing least 
squares distance to the average empirical coherence functions, estimated by 
averaging over days and latitude bands.  

\begin{table}[tp] 

\caption{Cross-covariance Mat\'ern family parameter estimates for GEFS 
  sea level pressure reforecast data.
  \label{tab:GEFS.pres.matern}}

\bigskip

\centering
\begin{tabular}{|c|cccccccc|}
  \hline \hline
  Forecast horizon & 24 & 48 & 72 & 96 & 120 & 144 & 168 & 192 \\
  \hline
  $\rho$ & 0.96 & 0.94 & 0.91 & 0.88 & 0.85 & 0.81 & 0.76 & 0.73 \\
  $a_{12}$ & 0.079 & 0.076 & 0.075 & 0.073 & 0.071 & 0.070 & 0.068 & 0.067 \\
  $\nu_{12}$ & 1.06 & 1.06 & 1.05 & 1.04 & 1.03 & 1.02 & 1.01 & 1.01 \\
  \hline
\end{tabular}

\end{table}

Table \ref{tab:GEFS.pres.matern} contains the cross-covariance parameter 
estimates corresponding to all forecast horizons.  As forecast horizon 
increases, all parameters decay; in fact, the decay is almost exactly 
linear for each variable beyond the 24h horizon.  Fitting a linear model 
to the parameters as a function of forecast horizon suggests the cross-covariance 
smoothness will decay to the marginal value $\nu = 0.94$ after approximately 
16 days, whereas the cross-covariance scale meets the marginal value between 
3-4 days.  Although these ideas can be used to generate scientific hypotheses, 
there is substantial extrapolation for $nu_{12}$ that strongly depends on a 
linear assumption.

A word of caution is in order; the estimates in Table 
\ref{tab:GEFS.pres.matern} do not always imply a valid multivariate covariance 
structure.  However, on any given grid the estimated parameters may yield 
a valid model, it just cannot be guaranteed for all grids.  One possibility is 
that the bivariate Mat\'ern is not sufficiently flexible to describe the 
stochastic structure of these fields, which is a call for further research 
in this area.

\subsection{Geopotential Height}  \label{subsec:geoheight}

Our second example is on the same spatial domain, but whose values are 
geopotential heights.  Geopotential height is the height (in meters) above 
sea level at which the atmospheric pressure is a certain level.  
In the atmospheric sciences, it is common to examine geopotential heights 
as indicators of climatic regimes, for instance \citet{knapp1996} examines 
the relationship between heights and temperature anomalies over a portion of 
the United States.  

Three of the most common geopotential height maps are the 850hPa, 500hPa and 300hPa 
maps.  The first, 850hPa, approximately defines the planetary boundary layer, 
which is the lowest level of the atmosphere that interacts with the surface 
of the Earth (note 1000hPa is approximately sea level).  The 300hPa level 
is at the core of the jet stream, while the 500hPa approximately divides 
the atmosphere in half, and whose anomalies are used in part to assess climatological 
temperature variations.  The vertical structure of geopotential heights 
is a focus of some interest within atmospheric sciences \citep{blackmon1979}.

We examine geopotential height reanalysis anomalies $Z_k(\bs,d)$ for 
$k=1,2,3$ representing 
the 850hPa, 500hPa and 300hPa pressure levels on days $d=1,\ldots,181$, the first 
6 months of 2014.  The anomalies are differences between the reanalysis 
height and a time-varying Nadaraya-Watson kernel smoothed estimate of the 
mean with a bandwidth of 5 days.  Experiments suggest the results below are 
qualitatively robust against choices of the bandwidth and smoothing kernel.  

Similar to the previous section, we smooth the marginal process periodogram 
(\ref{eq:I}) using a low-pass filter, and calculate smoothed empirical 
cross-periodograms yielding $\{\tilde{I}_{ij}(\bomega,d)\}_{i,j=1}^3$.  
Then the squared coherence is estimated as the arithmetic average of each 
day's empirical squared coherence estimate,
\[
  \hat{\gamma}_{ij}(\bomega)^2 = \frac{1}{181}\sum_{d=1}^{181} 
  \frac{|\tilde{I}_{ij}(\bomega,d)|^2}{\tilde{I}_{ii}(\bomega,d) 
  \tilde{I}_{jj}(\bomega,d)}.
\]

\begin{figure}[t]
  \centering
  \includegraphics[width=\linewidth]{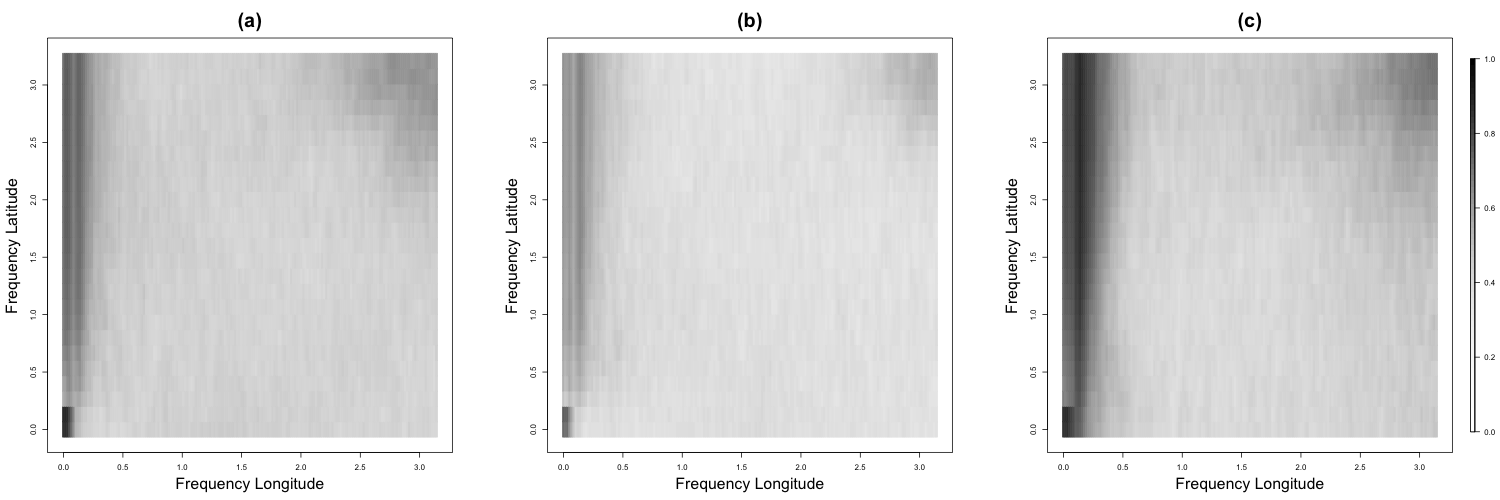}
  \caption{Estimated absolute coherence functions for the GEFS geopotential 
  height data between (a) 850hPa and 500hPa, (b) 850hPa and 300hPa and 
  (c) 500hPa and 300hPa.
  \label{fig:GEFS.geoheight.coh}}
\end{figure}


Figure \ref{fig:GEFS.geoheight.coh} contains the three estimated pairwise absolute 
coherence functions.  There is high coherence between the lower 
pressure levels at low frequencies, and some evidence of moderate coherence 
between all levels at low frequencies.  
We also note some strikingly different behavior than 
for the sea level pressure example.  First, there is an apparent ridge 
in coherence at low frequencies (approximately $2\pi9/360$) which 
may be indicative of equatorial planetary waves \citep{wang1996,xie1996,kiladis2009}.  
Planetary waves can play crucial roles in the formation of tropical cyclones 
\citep{molinari2007}.  
Additionally, note that there is high coherence at the highest Fourier frequencies 
for all coherence functions (capping out at approximately 
$0.62, 0.54,$ and $0.70$).  This is evidence of a nonseparable relationship 
in the frequency domain, and we are unaware of any current multivariate 
models that can adequately capture such behavior.  One possible explanation 
for this high coherence at high frequencies is artifacts in the data assimilation 
scheme, in particular aberrant observational data leading to unusually 
large anomalies in geopotential height.  Indeed, variables such as sea 
level pressure are well constrained by a wealth of observational data, while 
geopotential heights are less constrained, usually being observed by sparsely 
released weather balloons.

\begin{figure}[t]
  \centering
  \includegraphics[width=\linewidth]{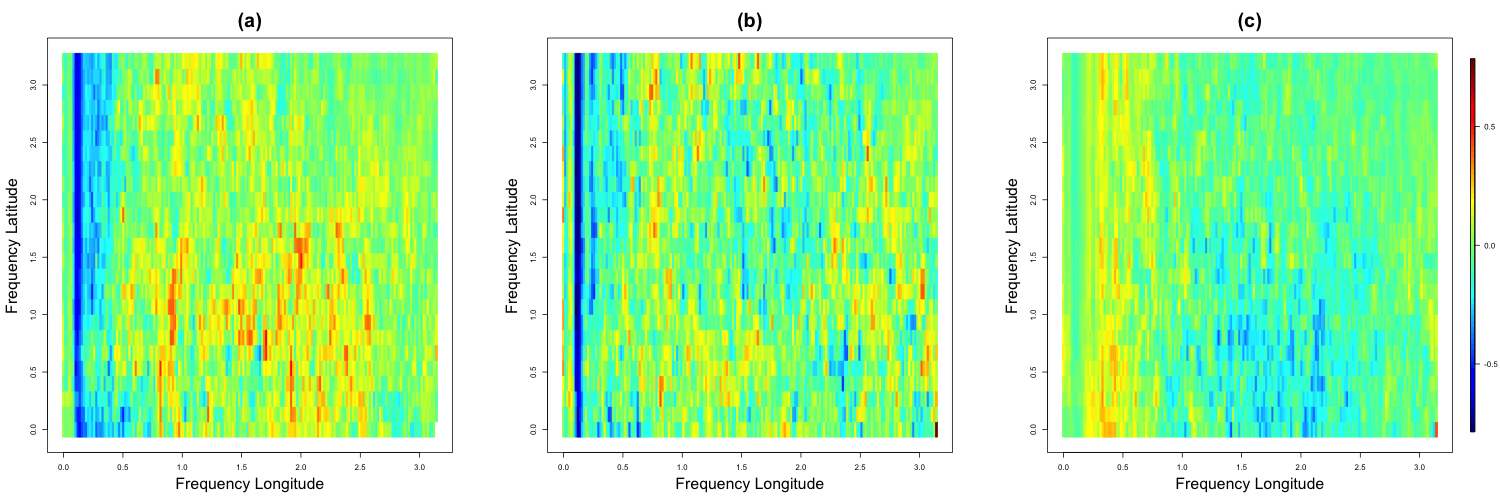}
  \caption{Estimated phase functions for the GEFS geopotential 
  height data between (a) 850hPa and 500hPa, (b) 850hPa and 300hPa and 
  (c) 500hPa and 300hPa.
  \label{fig:GEFS.geoheight.ph}}
\end{figure}

Figure \ref{fig:GEFS.geoheight.ph} shows pairwise plots for each pair of geopotential 
height anomalies.  In particular there is strong evidence of a phase shift 
at a low frequency band between the 850hPa and both lower pressure heights.  
These frequencies indicate wavelengths of approximately $4000-6000$km, which 
is a typical wavelength for planetary equatorial waves, or Rossby waves 
\citep{wang1996,xie1996,kiladis2009}.  
On the other hand, there is not substantial evidence of phase shift 
between the pairs of heights at other frequencies.  Most extant multivariate 
models utilize real-valued cross-spectral densities, and thus are insufficiently 
flexible to capture this type of phase shifted behavior at specific 
spectra.  

\section{Discussion}  \label{sec:discussion}

The notion of coherence, phase and gain are common in the time series literature, 
but have been yet unexplored for multivariate spatial processes.  We casted 
these functions for multidimensional processes.  The coherence between two 
variables can be interpreted as a measure of linear relationship at 
particular frequency bands, resulting in a complementary framework for 
comparing processes than the usual cross-covariance function.  
Phase and gain also yield straightforward interpretations as a physical space-shift 
and relative amplitude of frequency dependence when comparing two processes.  
We developed these ideas for stationary processes, and future research 
may be directed toward the analogous cases for nonstationary processes, 
perhaps extending the work of \citet{fuentes2002}.

Coherence, phase and gain can be estimated using smoothed cross-periodograms, 
and in our examples we showed that, as exploratory tools, these functions 
can be very useful in detecting structure that may not be readily 
captured using extant multivariate models.  We additionally illustrated that 
the coherence function gives a natural interpretation to the multivariate 
Mat\'ern cross-covariance parameters that have otherwise been uninterpretable, 
lending insight into an outstanding problem.  

A number of future research directions may be considered, including the 
adaptation of coherence to multivariate space-time processes.  This work 
can also be seen as a call to develop more flexible multivariate models, 
perhaps working directly in the spectral domain rather than the covariance 
domain, where most previous work has fallen, echoing the call of 
\citet{simpson2015}.

\section*{Appendix}

The following Lemma is useful in proving some results of the manuscript.
\begin{lemma}  \label{lem:intKZ.relation}
  Suppose $Z_1(\bs)$ is a stationary processes on $\real^d$ with covariance 
  function $C_1(\bh)$ having spectral density $f_1(\bomega)$ 
  and $Z_2(\bs) = \int K(\bs - \bu) Z_1(\bu) {\rm d}\bu$ 
  where $K$ is continuous, symmetric and square integrable with Fourier transform 
  $f_K(\bomega)$.  Then $Z_2(\bs)$ has covariance function 
  $C_2(\bh) = \int\int K(\bu + \bv - \bh) K(\bv) C_1(\bu) {\rm d}\bu {\rm d}\bv$ 
  with associated spectral density $f_2(\bomega) = f_1(\bomega) f_K(\bomega)^2$.  
  Additionally, the cross-covariance function between $Z_1$ and $Z_2$ is 
  $C_{12}(\bh) = \int K(\bu - \bh) C_1(\bu) {\rm d}\bu$ with spectral density 
  $f_{12}(\bomega) = f_1(\bomega) f_K(\bomega)$.
\end{lemma}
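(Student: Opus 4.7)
My plan is to establish the four claims in a natural order: first the cross-covariance $C_{12}$, then its spectral density $f_{12}$, then the covariance $C_2$ of $Z_2$, and finally its spectral density $f_2$. For the cross-covariance, I would apply Fubini to pull the real-valued kernel outside the expectation,
\[
  C_{12}(\bh) = \E\bigl(Z_1(\bs+\bh)\overline{Z_2(\bs)}\bigr)
  = \int K(\bs-\bw)\, C_1(\bs+\bh-\bw)\, d\bw,
\]
and then substitute $\bu = \bs+\bh-\bw$ to obtain the stated form $C_{12}(\bh) = \int K(\bu-\bh)\, C_1(\bu)\, d\bu$.

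Taking the Fourier transform of $C_{12}$ and substituting $\bv = \bu - \bh$ factors the integrand as $\int K(\bv) e^{i\bomega\T\bv}\, d\bv \cdot \int C_1(\bu) e^{-i\bomega\T\bu}\, d\bu = f_K(-\bomega)\cdot (2\pi)^d f_1(\bomega)$. The symmetry $K(\bv)=K(-\bv)$ makes $f_K$ real and even, so $f_K(-\bomega)=f_K(\bomega)$, and dividing by $(2\pi)^d$ yields $f_{12}(\bomega) = f_K(\bomega) f_1(\bomega)$.

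For the covariance of $Z_2$, I would expand
\[
  C_2(\bh) = \int\!\!\int K(\bs+\bh-\bu) K(\bs-\bw) C_1(\bu-\bw)\, d\bu\, d\bw
\]
and perform the change of variables $\bu' = \bu-\bw$, $\bv' = \bs-\bw$ (whose Jacobian has absolute value $1$), giving $\int\!\!\int K(\bh+\bv'-\bu') K(\bv') C_1(\bu')\, d\bu'\, d\bv'$. To match the form in the lemma I substitute $\bv' \to -\bv'$ and use $K(-\bv') = K(\bv')$, so the first factor becomes $K(\bh-\bv'-\bu') = K(\bu'+\bv'-\bh)$ by one more application of symmetry. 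The resulting expression is precisely $C_2 = K \star K \star C_1$, so the convolution theorem immediately produces $\widehat{C_2}(\bomega) = f_K(\bomega)^2 (2\pi)^d f_1(\bomega)$, whence $f_2(\bomega) = f_1(\bomega) f_K(\bomega)^2$.

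The main obstacle I anticipate is purely algebraic bookkeeping: tracking the changes of variables so that the $C_1$ argument and the two $K$ arguments end up in the precise form stated in the lemma. All of this ultimately rests on the symmetry $K(\bv) = K(-\bv)$, which both produces the sign flips needed to reconcile $K(\bh+\bv-\bu)$ with the stated $K(\bu+\bv-\bh)$ and guarantees that $f_K$ is real-valued, so no hidden complex conjugates arise in the spectral identities.
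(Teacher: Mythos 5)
Your proof is correct; the paper itself omits the argument, saying only that it ``involves straightforward calculations involving convolutions and is not included here,'' and your write-up supplies exactly those calculations --- the Fubini expansions, the changes of variables, and the repeated use of the symmetry of $K$ both to put $C_2$ in the stated form $K\star K\star C_1$ and to ensure $f_K$ is real and even so no conjugates appear. The one convention worth making explicit is that $f_K$ must be the unnormalized transform $\int K(\bv)\exp(-i\bomega\T\bv)\,{\rm d}\bv$ (no $(2\pi)^{-d}$ factor), which your bookkeeping implicitly assumes and which matches how the paper uses $F_K$ in its proof of Theorem \ref{th:prediction}.
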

The proof of this Lemma involves straightforward calculations involving 
convolutions and is not included here.

We recall the spectral representation for a stationary vector-valued process 
$\bZ(\bs)\in\real^p, \bs\in\real^d$ with matrix-valued covariance function 
$\bC(\bh)$ having spectral measures $F_{ij}, i,j=1,\ldots,p$ defined on the 
Borel $\sigma$-algebra ${\cal B}$ on $\real^d$.  
There is a set of complex random measures $\bM=(M_1,\ldots,M_p)$ 
on ${\cal B}$ such that if $B,B_1,B_2\in{\cal B}$ are disjoint, $\E M_{i}(B) = 0,  
\E( M_{i}(B) \overline{M_{j}(B)} ) = F_{ij}(B)$ and 
$\E( M_{i}(B_1) \overline{M_{j}(B_2)} ) = 0$ for $i,j=1,\ldots,p$.  
Then $\bZ(\bs)$ has the spectral representation
\[
  \bZ(\bs) = \int \exp(i\bomega\T\bs) {\rm d}\bM(\bomega),
\]
see \citet{gihman1974} for details.  
If all $F_{ij}$ admit associated spectral densities $f_{ij}$, then in 
shorthand we write $\E({\rm d}M_i(\bomega)\overline{{\rm d}M_j(\bomega)}) 
= f_{ij}(\bomega){\rm d}\bomega$.

\begin{proof}[Proof of Theorem \ref{th:prediction}]
  The spectral representation implies 
  \[
    Z_i(\bs) = \int \exp(i\bomega\T\bs) {\rm d}M_i(\bomega),
  \]
  for complex-valued random measures $M_i$, $i=1,2$.  
  Then if $K$ has Fourier transform $F_K$, 
  \begin{align*}
    \int K(\bu - \bs) Z_2(\bu) {\rm d}\bu &= 
    \int\int K(\bu - \bs) \exp(i\bomega\T\bu) {\rm d}M_2(\bomega) {\rm d}\bu \\
    &= \int \exp(i\bomega\T\bs) F_K(\bomega) {\rm d}M_2(\bomega)
  \end{align*}
  by a change of variables.  Then, using that $f_{ii}(\bomega){\rm d}\bomega = 
  \E|{\rm d}M_i(\bomega)|^2$ and 
  \[
    \E\left( \int g(\bomega){\rm d}M_i(\bomega) \overline{\int h(\bomega) 
    {\rm d}M_j(\bomega)}\right) = 
    \int g(\bomega) \overline{h(\bomega)} f_{ij}(\bomega) {\rm d}\bomega
  \]
  we have 
  \begin{align*}
    \E\left| Z_1(\bs) - \int K(\bu - \bs) Z_2(\bu) {\rm d}\bu \right|^2 
    &= \int\bigg( f_{11}(\bomega)  - f_{12}(\bomega) F_K(\bomega) - 
    f_{21}(\bomega)\overline{F_K(\bomega)} + \\
    &\qquad F_K(\bomega)\overline{F_K(\bomega)}f_{22}(\bomega)\bigg) {\rm d}\bomega \\
    &= \int \E\left| {\rm d}M_1(\bomega) - F_K(\bomega) {\rm d}M_2(\bomega)\right|^2.
  \end{align*}
  The integrand is minimized for each $\bomega$ if 
  \[
    F_K(\bomega) = \frac{\E({\rm d}M_1(\bomega) \overline{{\rm d}M_2(\bomega)})}
    {\E|{\rm d}M_2(\bomega)|^2}
    = \frac{f_{12}(\bomega)}{f_{22}(\bomega)}.
  \]
  That the density of $\int K(\bu - \bs) Z_2(\bu) {\rm d}\bu$ is 
  $|f_{12}(\bomega)|^2 / f_{22}(\bomega)$ now follows by the convolution theorem 
  for Fourier transforms.
\end{proof}

\begin{proof}[Proof of Proposition \ref{prop:c.conv}]
  If $f_{i}(\bomega)$ is the Fourier transform of $c_i, i=1,2$, the result immediately 
  follows as the spectral density of $C_{ij}(\bh)$ is $f_i(\bomega)f_j(\bomega)$.
\end{proof}

\begin{proof}[Proof of Proposition \ref{prop:z.conv}]
  This result follows directly from Lemma \ref{lem:intKZ.relation}.
\end{proof}

\begin{proof}[Proof of Proposition \ref{prop:k.conv}]
  This result follows from Lemma \ref{lem:intKZ.relation} and that the 
  spectral density for white noise is constant over all frequencies.
\end{proof}

\section*{Acknowledgements}

The author thanks Michael Scheuerer for many helpful discussions during the 
development of this research.  
This research was supported by National Science Foundation grants 
DMS-1417724 
and DMS-1406536.



\begin{thebibliography}{}
\newcommand{\enquote}[1]{``#1''}

\bibitem[Apanasovich et~al.(2012)Apanasovich, Genton, and Sun]{apanasovich2012}
Apanasovich, T.~V., Genton, M.~G., and Sun, Y. (2012), \enquote{{A valid
  Mat\'ern class of cross-covariance functions for multivariate random fields
  with any number of components},} \emph{Journal of the American Statistical
  Association}, 107, 180--193.

\bibitem[Berrocal et~al.(2010)Berrocal, Gelfand, and Holland]{berrocal2010}
Berrocal, V.~J., Gelfand, A.~E., and Holland, D.~M. (2010), \enquote{{A
  bivariate space-time downscaler under space and time misalignment},}
  \emph{Annals of Applied Statistics}, 4, 1942--1975.

\bibitem[Bevilacqua et~al.(2015)Bevilacqua, Hering, and Porcu]{bevilacqua2015}
Bevilacqua, M., Hering, A.~S., and Porcu, E. (2015), \enquote{{On the
  flexibility of multivariate covariance models},} \emph{Statistical Science},
  in press.

\bibitem[Bhat et~al.(2010)Bhat, Haran, and Goes]{bhat2010}
Bhat, K.~S., Haran, M., and Goes, M. (2010), \enquote{{Computer model
  calibration with multivariate spatial output: a case study in climate
  parameter learning},} in \emph{Frontiers of Statistical Decision Making and
  Bayesian Analysis}, eds. M.~H. Chen, P.~M\"uller, D.~Sun, K.~Ye, and D.~K.
  Dey, pp. 401--408, Springer-Verlag, New York.

\bibitem[Blackmon et~al.(1979)Blackmon, Madden, Wallace, and
  Gutzler]{blackmon1979}
Blackmon, M.~L., Madden, R.~A., Wallace, J.~M., and Gutzler, D.~S. (1979),
  \enquote{{Geographical variations in the vertical structure of geopotential
  height fluctuations},} \emph{Journal of the Atmospheric Sciences}, 36,
  2450--2466.

\bibitem[Brockwell and Davis(2009)Brockwell and Davis]{brockwell2009}
Brockwell, P.~J. and Davis, R.~A. (2009), \emph{Time series: theory and
  methods}, Springer Science \& Business Media.

\bibitem[Cram\'er(1940)Cram\'er]{cramer1940}
Cram\'er, H. (1940), \enquote{{On the theory of stationary random processes},}
  \emph{Annals of Mathematics}, 41, 215--230.

\bibitem[Cressie and Johannesson(2008)Cressie and Johannesson]{cressie2008}
Cressie, N. and Johannesson, G. (2008), \enquote{{Fixed rank kriging for very
  large spatial data sets},} \emph{Journal of the Royal Statistical Society,
  Series B}, 70, 209--226.

\bibitem[Fuentes(2002)Fuentes]{fuentes2002}
Fuentes, M. (2002), \enquote{{Spectral methods for nonstationary spatial
  processes},} \emph{Biometrika}, 89, 197--210.

\bibitem[Fuentes(2006)Fuentes]{fuentes2006}
Fuentes, M. (2006), \enquote{{Testing for separability of spatial-temporal
  covariance functions},} \emph{Journal of Statistical Planning and Inference},
  136, 447--466.

\bibitem[Gaspari and Cohn(1999)Gaspari and Cohn]{gaspari1999}
Gaspari, G. and Cohn, S.~E. (1999), \enquote{{Construction of correlation
  functions in two and three dimensions},} \emph{Quarterly Journal of the Royal
  Meteorological Society}, 125, 723--757.

\bibitem[Gaspari et~al.(2006)Gaspari, Cohn, Guo, and Pawson]{gaspari2006}
Gaspari, G., Cohn, S.~E., Guo, J., and Pawson, S. (2006),
  \enquote{{Construction and application of covariance functions with variable
  length fields},} \emph{Quarterly Journal of the Royal Meteorological
  Society}, 132, 1815--1838.

\bibitem[Genton and Kleiber(2015)Genton and Kleiber]{genton2015}
Genton, M.~G. and Kleiber, W. (2015), \enquote{{Cross-covariance functions for
  multivariate geostatistics},} \emph{Statistical Science}, in press.

\bibitem[Gihman and Skorohod(1974)Gihman and Skorohod]{gihman1974}
Gihman, I.~I. and Skorohod, A.~V. (1974), \emph{{The Theory of Stochastic
  Processes, Vol.~1}}, Springer-Verlag, Berlin.

\bibitem[Gneiting et~al.(2010)Gneiting, Kleiber, and Schlather]{gneiting2010}
Gneiting, T., Kleiber, W., and Schlather, M. (2010), \enquote{{Mat\'ern
  cross-covariance functions for multivariate random fields},} \emph{Journal of
  the American Statistical Association}, 105, 1167--1177.

\bibitem[Goff and Jordan(1988)Goff and Jordan]{goff1988}
Goff, J.~A. and Jordan, T.~H. (1988), \enquote{{Stochastic modeling of seafloor
  morphology: Inversion of sea beam data for second-order statistics},}
  \emph{Journal of Geophysical Research}, 93, 13589--13608.

\bibitem[Goulard and Voltz(1992)Goulard and Voltz]{goulard1992}
Goulard, M. and Voltz, M. (1992), \enquote{{Linear coregionalization model:
  Tools for estimation and choice of cross-variogram matrix},}
  \emph{Mathematical Geology}, 24, 269--282.

\bibitem[Hamill et~al.(2011)Hamill, Whitaker, Kleist, Fiorino, and
  Benjamin]{hamill2011}
Hamill, T.~M., Whitaker, J.~S., Kleist, D.~T., Fiorino, M., and Benjamin, S.~G.
  (2011), \enquote{{Predictions of 2010's tropical cyclones using the GFS and
  ensemble-based data assimilation methods},} \emph{Monthly Weather Review},
  139, 3243--3247.

\bibitem[Hamill et~al.(2013)Hamill, Bates, Whitaker, Murray, Fiorino,
  Galarneau~Jr., Zhu, and Lapenta]{hamill2013}
Hamill, T.~M., Bates, G.~T., Whitaker, J.~S., Murray, D.~R., Fiorino, M.,
  Galarneau~Jr., T.~J., Zhu, Y., and Lapenta, W. (2013), \enquote{{NOAA's
  second-generation global medium-range ensemble reforecast dataset},}
  \emph{Bulletin of the American Meteorological Society}, 94, 1553--1565.

\bibitem[Handcock and Stein(1993)Handcock and Stein]{handcock1993}
Handcock, M.~S. and Stein, M.~L. (1993), \enquote{{A Bayesian analysis of
  kriging},} \emph{Technometrics}, 35, 403--410.

\bibitem[Helterbrand and Cressie(1994)Helterbrand and Cressie]{helterbrand1994}
Helterbrand, J.~D. and Cressie, N. (1994), \enquote{{Universal co-kriging under
  intrinsic coregionalization},} \emph{Mathematical Geology}, 26, 205--226.

\bibitem[Hering et~al.(2015)Hering, Kazor, and Kleiber]{hering2015}
Hering, A.~S., Kazor, K., and Kleiber, W. (2015), \enquote{{A Markov-switching
  vector autoregressive stochastic wind generator for multiple spatial and
  temporal scales},} \emph{Resources}, 4, 70--92.

\bibitem[Kiladis et~al.(2009)Kiladis, Wheeler, Haertel, Straub, and
  Roundy]{kiladis2009}
Kiladis, G.~N., Wheeler, M.~C., Haertel, P.~T., Straub, K.~H., and Roundy,
  P.~E. (2009), \enquote{{Convectively coupled equatorial waves},}
  \emph{Reviews of Geophysics}, 47.

\bibitem[Kleiber and Nychka(2015)Kleiber and Nychka]{kleiber2015}
Kleiber, W. and Nychka, D.~W. (2015), \enquote{{Equivalent kriging},}
  \emph{Spatial Statistics}, 12, 31--49.

\bibitem[Knapp and Yin(1996)Knapp and Yin]{knapp1996}
Knapp, P.~A. and Yin, Z. (1996), \enquote{{Relationships between geopotential
  heights and temperature in the south-eastern US during wintertime warming and
  cooling periods},} \emph{International Journal of Climatology}, 16, 195--211.

\bibitem[Li and Zhang(2011)Li and Zhang]{li2011}
Li, B. and Zhang, H. (2011), \enquote{{An approach to modeling asymmetric
  multivariate spatial covariance structures},} \emph{Journal of Multivariate
  Analysis}, 102, 1445--1453.

\bibitem[Lim and Stein(2008)Lim and Stein]{lim2008}
Lim, C.~Y. and Stein, M. (2008), \enquote{{Properties of spatial
  cross-periodograms using fixed-domain asymptotics},} \emph{Journal of
  Multivariate Analysis}, 99, 1962--1984.

\bibitem[Majumdar and Gelfand(2007)Majumdar and Gelfand]{majumdar2007}
Majumdar, A. and Gelfand, A.~E. (2007), \enquote{{Multivariate spatial modeling
  for geostatistical data using convolved covariance functions},}
  \emph{Mathematical Geology}, 39, 225--245.

\bibitem[Mardia and Goodall(1993)Mardia and Goodall]{mardia1993}
Mardia, K. and Goodall, C. (1993), \enquote{{Spatial-temporal analysis of
  multivariate environmental monitoring data},} in \emph{Multivariate
  Environmental Statistics}, eds. G.~P. Patil and C.~R. Rao, pp. 347--386,
  Amsterdam: North Holland.

\bibitem[Molinari et~al.(2007)Molinari, Lombardo, and Vollaro]{molinari2007}
Molinari, J., Lombardo, K., and Vollaro, D. (2007), \enquote{{Tropical
  cyclogenesis within an equatorial Rossby wave packet},} \emph{Journal of the
  Atmospheric Sciences}, 64, 1301--1317.

\bibitem[Oliver(2003)Oliver]{oliver2003}
Oliver, D.~S. (2003), \enquote{{Gaussian cosimulation: Modelling of the
  cross-covariance},} \emph{Mathematical Geology}, 35, 681--698.

\bibitem[Royle and Berliner(1999)Royle and Berliner]{royle1999}
Royle, A. and Berliner, L.~M. (1999), \enquote{{A hierarchical approach to
  multivariate spatial modeling and prediction},} \emph{Journal of
  Agricultural, Biological and Environmental Statistics}, 4, 1--28.

\bibitem[Schmidt and Gelfand(2003)Schmidt and Gelfand]{schmidt2003}
Schmidt, A.~M. and Gelfand, A.~E. (2003), \enquote{{A Bayesian
  coregionalization approach for multivariate pollutant data},} \emph{Journal
  of Geophysical Research -- Atmospheres}, 108.

\bibitem[Simpson et~al.(2015)Simpson, Lindgren, and Rue]{simpson2015}
Simpson, D., Lindgren, F., and Rue, H. (2015), \enquote{{Beyond the valley of
  the covariance function},} \emph{Statistical Science}, in press.

\bibitem[Stein(1995)Stein]{stein1995}
Stein, M.~L. (1995), \enquote{{Fixed-domain asymptotics for spatial
  periodograms},} \emph{Journal of the American Statistical Association}, 90,
  1277--1288.

\bibitem[Stein(1999)Stein]{stein1999}
Stein, M.~L. (1999), \emph{{Interpolation of Spatial Data: Some Theory for
  Kriging}}, New York: Springer-Verlag.

\bibitem[Ver~Hoef and Barry(1998)Ver~Hoef and Barry]{verhoef1998}
Ver~Hoef, J.~M. and Barry, R.~P. (1998), \enquote{{Constructing and fitting
  models for co\-kriging and multivariable spatial prediction},} \emph{Journal
  of Statistical Planning and Inference}, 69, 275--294.

\bibitem[Ver~Hoef et~al.(2004)Ver~Hoef, Cressie, and Barry]{verhoef2004}
Ver~Hoef, J.~M., Cressie, N., and Barry, R.~P. (2004), \enquote{{Flexible
  spatial models for kriging and cokriging using moving averages and the Fast
  Fourier Transform (FFT)},} \emph{Journal of Computational and Graphical
  Statistics}, 13, 265--282.

\bibitem[Wackernagel(2003)Wackernagel]{wackernagel2003}
Wackernagel, H. (2003), \emph{{Multivariate Geostatistics}}, Berlin:
  Springer-Verlag, third edn.

\bibitem[Wang and Xie(1996)Wang and Xie]{wang1996}
Wang, B. and Xie, X. (1996), \enquote{{Low-frequency equatorial waves in
  vertically sheared zonal flow. Part I: stable waves},} \emph{Journal of the
  Atmospheric Sciences}, 53, 449--467.

\bibitem[Xie and Wang(1996)Xie and Wang]{xie1996}
Xie, X. and Wang, B. (1996), \enquote{{Low-frequency equatorial waves in
  vertically sheared zonal flow. Part II: unstable waves},} \emph{Journal of
  the Atmospheric Sciences}, 53, 3589--3605.

\bibitem[Zhang and Zimmerman(2005)Zhang and Zimmerman]{zhang2005}
Zhang, H. and Zimmerman, D.~L. (2005), \enquote{{Towards reconciling two
  asymptotic frameworks in spatial statistics},} \emph{Biometrika}, 92,
  921--936.

\end{thebibliography}
\end{document}